\newtheorem*{theorem-non}{Theorem}
\newtheorem{thm}{Theorem}
\newtheorem{lem}[thm]{Lemma}
\newtheorem{cor}[thm]{Corollary}
\begin{document}

\title{A convex cover for closed unit curves has area at least 0.0975}


\author{Bogdan Grechuk         \and
        Sittichoke Som-am 
}




\maketitle

\begin{abstract}
We combine geometric methods with numerical box search algorithm to show that the minimal area of a convex set on the plane which can cover every closed plane curve of unit length is at least $0.0975$. This improves the best previous lower bound of $0.096694$. In fact, we show that the minimal area of convex hull of circle, equilateral triangle, and rectangle of perimeter $1$ is between $0.0975$ and $0.09763$.
\end{abstract}

\section{Introduction}
\label{intro}
In 1966, Leo Moser \cite{moser1966poorly} posed a question to determine the smallest area of plane region $S$
which can cover every curve of length $1$, assuming that the curve may be translated and rotated to fit inside the region. This problem, known as ``Moser's worm problem'', remains open, and it is not even known that the smallest-area region with this property exists - an alternative possibility is that the infimum of possible areas of $S$ can not be actually attained. 
The current record belonging to Norwood and Poole \cite{norwood2003improved}, who constructed a (nonconvex) set $S$ with this property and area at most $0.260437$. From the lower bounds perspective, it is known only that area of any such set $S$ must be strictly positive \cite{marstrand1979packing}. 

Several variants of Moser's worm problem has been studied in the literature. In particular, one can modify:
\begin{enumerate}[label=(\arabic*)]
\item the allowed covering regions (e.g. triangle, rectangle, convex, non-convex)
\item the sets of curves to be covered (e.g. closed curves, convex curves)
\end{enumerate}

If one insists for the covering set $S$ to be convex, Laidecker and Poole \cite{laidacker1986existence} used Blachke Selection Theorem to show that the solution to Moser's worm problem exists. 
If $\alpha$ is the area of this smallest convex covering set, it is known that $0.232239\leq\alpha\leq 0.27091$, where the upper bound  was found by Wang \cite{wang2006improved} in 2006, while the lower bound was proved by Khandhawit, Sriswasdi and Pagonakis \cite{khandhawit2013lower} in 2013. 

This work studies the version of Moser's worm problem, asking for {\bf \emph{convex}} region $S$ of minimal area $\beta$, which can cover every {\bf \emph{closed}} curve of length $1$. H.G. Eggleton \cite{eggleston2007problems} proved in 1957 that a triangle covers all closed unit curves if and only if it can cover the circle of perimeter $1$. Therefore, the smallest triangle which covers every closed unit curve is the equilateral triangle of side ${\displaystyle \frac{\sqrt{3}}{\pi}}$ and area $\frac{3\sqrt{3}}{4\pi^2}\approx 0.13162$. In 1972, Shaer and Wetzel \cite{schaer1972boxes} proved that the smallest rectangle with this property has side lengths $\frac{1}{\pi}$ and $\sqrt{\frac{1}{4}-\frac{1}{\pi^2}}$ and area about $0.12274$.
In 2006, Furedi and Wetzel \cite{furedi2011covers} found a cover in the form of pentagon with area less than $0.11222$. Recently, Wichiramala \cite{wichiramala2018smaller} discovered a hexagonal cover with area less than $0.11023$.

Minimum-area convex cover for a set of curves is, equivalently, the minimum-area convex hull of these curves. In 1973, Chakerian and Klamkin \cite{chakerian1973minimal} proved that the convex hull of the circle with perimeter $1$ and the line segment of length $\frac{1}{2}$ is at least $0.0963275$, thus providing the first lower bound for $\beta$.
In 2006, Furedi and Wetzel \cite{furedi2011covers} proved that the convex hull of the circle with perimeter 1 and the $0.0261682\times0.4738318$ rectangle is at least $0.0966675$. In 2011, Furedi and Wetzel \cite{furedi2011covers} improved the lower bound by replacing the $0.0261682\times0.4738318$ rectangle by curvilinear rectangle. This gives the area of convex hull about $0.096694$. Hence, the best published bounds for $\beta$ were, before this work,
$$
0.096694 \leq \beta \leq 0.11023.
$$
In an unpublished work, Som-am \cite{som2010improved} used the Brass grid search method \cite{brass2005lower} to show that the minimal-area convex hull of the line segment of length $\frac{1}{2}$, circle with perimeter $1$, and the equilateral triangle with perimeter $1$, is about $0.096905$.

The main result of this paper is the following one.

\begin{thm}\label{th:main}
Any convex set $S$ on the plane which can cover circle of perimeter $1$, equilateral triangle of perimeter $1$, and rectangle of size $0.0375\times0.4625$ (and perimeter $1$) has area at least $0.0975$.
\end{thm}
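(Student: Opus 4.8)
The plan is to reduce the assertion to a rigorous global minimization over a compact, low-dimensional configuration space, and then to carry out that minimization by a branch-and-bound (``box search'') argument in the spirit of \cite{brass2005lower} and \cite{som2010improved}, using elementary geometry both to compactify the problem and to lower the effective dimension in the region near the optimum.

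\emph{Step 1: reduction to a convex hull.} Since $S$ is convex and contains a congruent copy of each of the circle $C$ of radius $r=1/(2\pi)$, the equilateral triangle $T$ of side $1/3$, and the $0.0375\times 0.4625$ rectangle $R$, it contains $K:=\mathrm{conv}(C\cup T\cup R)$, so it suffices to prove $\mathrm{area}(K)\ge 0.0975$ for every placement of the three shapes. Because $C$ is rotationally symmetric we may fix it centered at the origin and, using that same rotational freedom to remove the overall rotation, fix the orientation of $T$; a placement is then described by the centers $(x_T,y_T)$, $(x_R,y_R)$ and an angle $\varphi_R$, which by the dihedral symmetry of $R$ and a reflection across an axis of $T$ may be restricted to a bounded range. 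If some vertex of $T$ or $R$ lies at distance $\ge d_0$ from the origin for a suitable explicit $d_0$, then $K$ contains the disk $C$ together with a far point, and the area of the convex hull of a disk of radius $r$ and a single point at distance $d$ equals $r\sqrt{d^2-r^2}+r^2\bigl(\pi-\arccos(r/d)\bigr)$, which exceeds $0.0975$ once $d\ge d_0$. Hence the configurations to be checked form a compact set $\mathcal C\subseteq\mathbb R^5$.

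\emph{Step 2: the area as an explicit function.} Because the convex hull of a disk and finitely many polygons equals the convex hull of the disk and the polygons' vertices, $K=\mathrm{conv}(C\cup V)$, where $V$ is the seven-point set of vertices of $T$ and $R$. For such a hull the boundary is a cyclic sequence of straight segments (joining consecutive active vertices that ``see'' each other past the disk) and of pairs of common outer tangent lines joined by an arc of $C$, so $\mathrm{area}(K)=\tfrac12\oint_{\partial K}(x\,dy-y\,dx)$ is a finite sum of elementary terms; this presents $\mathrm{area}(K)$ as a piecewise-analytic function $A$ on $\mathcal C$, the pieces indexed by the finitely many combinatorial types of the hull (which vertices are active, in what cyclic order, and which consecutive pairs are separated by an arc). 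On each cell one also records a cheap lower bound valid throughout the cell — for instance $\mathrm{area}(C)$ plus the areas of the circular segments of $C$ cut off by the support lines through the active vertices — which already disposes of most of $\mathcal C$.

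\emph{Step 3: box search, and the obstacle.} Moving a point of $V$ by $\delta$ changes $K$ by at most $\delta$ in Hausdorff distance (since $C\cup V$ does), hence changes $A$ by at most a constant times $\delta$; translating or slightly rotating $T$ or $R$ moves its vertices by controlled amounts; so $A$ is $L$-Lipschitz on $\mathcal C$ with an explicit $L$. We subdivide $\mathcal C$ into boxes: on a box $B$ with center $c$, if $A(c)-L\,\mathrm{diam}(B)\ge 0.0975$ — or if the cheap per-cell bound already does — we discard $B$; otherwise we subdivide and recurse. Most of $\mathcal C$ is eliminated at coarse resolution, but the true minimum is $\approx 0.09763$, so a thin neighbourhood of it, where the slack over $0.0975$ is only $\sim 10^{-4}$, survives to deep levels; there we invoke the geometric normalization of Step~1 together with a local ``pinning'' principle — at a local minimum each of $C$, $T$, $R$ must touch $\partial K$ along a spanning set of contacts, which forces tangency and incidence relations and eliminates free parameters — to obtain a sharper, essentially one- or two-parameter estimate. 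Combining the per-box bounds yields $A\ge 0.0975$ on all of $\mathcal C$, hence $\mathrm{area}(S)\ge\mathrm{area}(K)\ge 0.0975$. The crux is exactly this narrow band: the combinatorial type of the hull changes across $\mathcal C$, so the formula for $A$ and its derivative bounds must be assembled case by case and matched uniformly across cell boundaries; the Lipschitz constant must be good enough to keep the box count tractable; and near the optimum one must genuinely exploit structural lemmas to cut the effective dimension rather than refine blindly. Certifying that the particular rectangle $0.0375\times 0.4625$ — not merely a generic thin rectangle — delivers the bound $0.0975$, and that no placement of the three shapes undercuts it, is what the search accomplishes.
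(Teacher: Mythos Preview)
Your overall strategy---reduce to a convex-hull area, parametrize by five real numbers, compactify via an explicit far-vertex bound, establish a Lipschitz constant, and then run a branch-and-bound box search---is exactly the paper's strategy. The differences are in the implementation details and in what is actually needed to make the search terminate.

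First, two simplifications you did not use. The paper does \emph{not} work with the circle $C$ itself; it replaces $C$ by the inscribed regular $500$-gon $F$, so that every configuration is a union of three polygons and the area function is the ordinary polygonal convex-hull area---no arcs, no tangent lines, no case split by combinatorial type. Since $F\subset C$, the inequality $\mathcal{A}(F,R,T)>0.0975$ implies the claim. Second, the paper normalizes by fixing the \emph{rectangle}'s orientation (axes parallel to $R$'s sides) rather than the triangle's; the free angle is then the triangle angle $\theta\in[0,2\pi/3]$. This choice interacts well with the F\'ary--R\'edei lemma: translating $R$ or $T$ along a line makes the hull area convex in that translation parameter, which the paper exploits to get explicit coordinatewise Lipschitz constants $C_1,\dots,C_4$ from asymptotic slopes, and a separate geometric argument gives $C_5$ for the angle. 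Your Hausdorff-distance argument is valid but gives a single coarser constant tied to the perimeter of $K$.

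The more substantive divergence is your Step~3 near the minimum. You anticipate that the slack $\approx 1.3\times 10^{-4}$ forces you to invoke a ``pinning principle'' (contacts at the optimum kill parameters) to cut the dimension. The paper does \emph{nothing} of the sort: with the $500$-gon reduction and the F\'ary--R\'edei Lipschitz constants, the naive box search simply terminates, in about $7.18\times 10^{9}$ boxes. So the structural lemma you describe is not needed for the proof as the paper gives it. Conversely, in your write-up that principle is doing real work yet is only asserted heuristically; as stated it is a gap, since you neither prove that a local minimizer must satisfy the claimed tangency/incidence relations nor show how those relations yield a rigorous one- or two-parameter lower bound covering the whole surviving neighbourhood. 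If you want to keep your parametrization (circle rather than $500$-gon, triangle orientation fixed), the cleanest fix is to adopt the paper's two devices: polygonalize $C$ to avoid the arc bookkeeping, and use F\'ary--R\'edei to sharpen the Lipschitz constants, after which the brute-force search goes through without any pinning argument.
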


Theorem \ref{th:main} immediately implies that  
$$
0.0975 \leq \beta,
$$
which is an improvement comparing the best published lower bound $0.096694$, as well as comparing an unpublished lower bound $0.096905$.

If $\beta'$ is the minimal area of a set which can cover circle, equilateral triangle, and \emph{any} rectangle of perimeter $1$, then Theorem \ref{th:main} states that 
$$
0.0975 \leq \beta' \leq \beta.
$$
Our computation shows that the \emph{actual} value of $\beta'$ is about
$$
\beta' \approx 0.09762742.
$$
The bound in Theorem \ref{th:main} is slightly weaker, because we need some margin to allow rigorous analysis of our numerical algorithms. We also show rigorously that $\beta' \leq 0.09763$. This implies that, to improve lower bound for $\beta$ to $0.09763$ and beyond, different approach is required.

This paper is organized as follows. In Section 2, we give some basic definitions, and prove preliminary lemmas by geometric methods. These lemmas, together with the box-search numerical algorithm, are applied for proving Theorem \ref{th:main} in Section 3 and 4. Some concluding discussion is presented in Section 5.

\section{Geometric analysis}
\label{sec:1}
Assume that $C$ is a circle with radius $r={\displaystyle \frac{1}{2\pi}}$, $R$ is a rectangle with side $u\times v$ such that $u+v={\displaystyle \frac{1}{2}}$ and $u=0.0375$, and $T$ is an equilateral triangle of side ${\displaystyle \frac{1}{3}}$. Remark that $C$, $R$, and $T$ are convex polygons in $\mathbb{R}^{2}$. Our aim is to prove that, no matter how $C$, $R$, and $T$ are placed in $\mathbb{R}^{2}$, the area of their convex hull is at least $0.0975$.

Let $F$ be a regular $500$-gon inscribed in the circle, such that the sides of $R$ are parallel to some longest diagonals of $F$. We will call the union $X=F \cup R \cup T$ a \emph{configuration}.  
For any configuration $X$, let $\mathcal{H}(X)$ denote the convex hull of $X$, and $\mathcal{A}(X)$ the area of $\mathcal{H}(X)$.

Let us put a coordinate center $(0,0)$ at the center of $F$, and let X axis and Y axis be parallel to the longer and shorter sides of the rectangle, respectively. 
Let $R_0 (x_{1},y_{1})$ be the center of $R$. We can orient the axes in such a way that $x_1 \geq 0$ and $y_1 \geq 0$.
The vertices of $R$ are defined by 
$R_{1}\left(x_{1}-{\left(\frac{\frac{1}{2}-u}{2}\right)},
y_{1}+{\frac{u}{2}}\right)$, 
$R_{2}\left(x_{1}+{\left(\frac{\frac{1}{2}-u}{2}\right)},y_{1}+{\frac{u}{2}}\right)$, 
$R_{3}\left(x_{1}+{\left(\frac{\frac{1}{2}-u}{2}\right)},y_{1}-{\frac{u}{2}}\right)$, 
and $R_{4}\left(x_{1}-{\left(\frac{\frac{1}{2}-u}{2}\right)},y_{1}-{\frac{u}{2}}\right)$.
Let $T_0 (x_{2},y_{2})$ be the center of $T$ and $\theta$ be the angle between X axis and $T_0T_1$,
\begin{figure}
\begin{centering}
\includegraphics[scale=0.6]{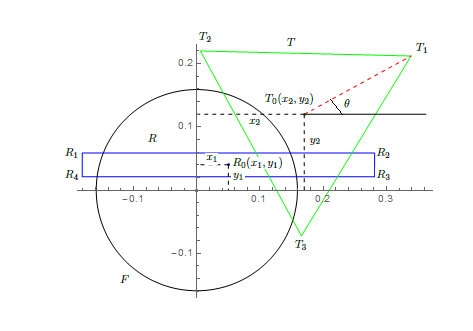}
\par\end{centering}

\caption{The configuration $X$ which depends on $x_1,y_1,x_2,y_2,\theta$}

\label{f1}
\end{figure}
Then $T_{1}(x_{2}+\frac{\sqrt{3}}{9}\cos\theta,y_{2}+\frac{\sqrt{3}}{9}\sin\theta),$
$T_{2}(x_{2}+\frac{\sqrt{3}}{9}\cos(\theta+\frac{2\pi}{3}),y_{2}+\frac{\sqrt{3}}{9}\sin(\theta+\frac{2\pi}{3}))$
and $T_{3}(x_{2}+\frac{\sqrt{3}}{9}\cos(\theta+\frac{4\pi}{3}),y_{2}+\frac{\sqrt{3}}{9}\sin(\theta+\frac{4\pi}{3}))$
are the vertices of triangle $T$.

In summary, the location of $F$, $R$, and $T$ is fully described by $5$ parameters: $x_{1},y_{1},x_{2},y_{2}$, and $\theta$.

Let $f:\mathbb{R}^{5}\rightarrow\mathbb{R}$ be a function which maps vector $(x_{1},y_{1},x_{2},y_{2},\theta)$ to the area $\mathcal{A}(X)$ of the convex hull of the corresponding configuration $X$. 
Clearly, $f$ is a continuous function. 
Because $F$ is a subset of $C$, Theorem \ref{th:main} would follow from the inequality 
$$
f(x_{1},y_{1},x_{2},y_{2},\theta) > 0.0975, \quad \forall \, x_{1}, y_{1}, x_{2}, y_{2}, \theta.
$$

The following result of Fary and Redei \cite{fary1950zentralsymmetrische} plays an important role in our analysis

\begin{lem}\label{lem:fary}\cite{fary1950zentralsymmetrische}
Let $S_1$ and $S_2$ be two bounded convex sets in ${\mathbb R}^2$. If $S_1$ is translated along a line with constant velocity, then the volume of the convex hull of $S_1$ and $S_2$ is a convex function of time.
\end{lem}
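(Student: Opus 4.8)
The plan is to reduce the statement to a one–dimensional fact about convex functions by lifting one dimension up. Since areas and convexity are unaffected by rotations, and rescaling time only rescales the velocity, I may assume the velocity is $(1,0)$ (zero velocity being trivial); replacing $S_1,S_2$ by their closures changes no areas, so I also assume them compact. Write $K_t=\mathrm{conv}\big((S_1+(t,0))\cup S_2\big)$ and $g(t)=\mathrm{vol}(K_t)$. I will use the elementary identity $\mathrm{conv}(A\cup B)=\bigcup_{\lambda\in[0,1]}(\lambda A+(1-\lambda)B)$, valid for nonempty convex $A,B$; applied with $A=S_1+(t,0)$ and $B=S_2$ it gives $K_t=\bigcup_{\lambda\in[0,1]}\big(\lambda S_1+(1-\lambda)S_2+(\lambda t,0)\big)$. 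Now lift: in $\mathbb R^3$ with coordinates $(x,y,\lambda)$ set
\[
W=\mathrm{conv}\big((S_1\times\{1\})\cup(S_2\times\{0\})\big),
\]
a fixed compact convex body whose slice at height $\lambda$, namely $W\cap(\mathbb R^2\times\{\lambda\})$, equals $(\lambda S_1+(1-\lambda)S_2)\times\{\lambda\}$ by the same identity. Consequently $K_t=\Phi_t(W)$, where $\Phi_t\colon\mathbb R^3\to\mathbb R^2$ is the linear map $\Phi_t(x,y,\lambda)=(x+\lambda t,\,y)$. The key gain is that $W$ does not depend on $t$: only $\Phi_t$ does, and the only $t$–dependence left is the affine one inside $x+\lambda t$.

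Next I compute $g(t)$ by Fubini in the $y$–direction. For fixed $y$ let $W_y=\{(x,\lambda):(x,y,\lambda)\in W\}$, a compact convex subset of $\mathbb R^2$. The vertical slice of $K_t=\Phi_t(W)$ over $y$ is $\{x+\lambda t:(x,\lambda)\in W_y\}$, the image of $W_y$ under the linear functional $(x,\lambda)\mapsto x+\lambda t$, hence a compact interval of length
\[
L_y(t)=\max_{(x,\lambda)\in W_y}(x+\lambda t)-\min_{(x,\lambda)\in W_y}(x+\lambda t).
\]
For each point $(x,\lambda)$ the value $x+\lambda t$ is affine in $t$, so $t\mapsto\max_{(x,\lambda)\in W_y}(x+\lambda t)$ is a supremum of affine functions, and $t\mapsto-\min_{(x,\lambda)\in W_y}(x+\lambda t)$ likewise; both are convex in $t$, so $L_y$ is convex. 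Finally, the set of $y$ with $W_y\neq\emptyset$ is the projection of $W$ onto the $y$–axis, a fixed bounded interval $[y_-,y_+]$ independent of $t$, because $\Phi_t$ leaves the $y$–coordinate untouched. Hence
\[
g(t)=\mathrm{vol}(K_t)=\int_{y_-}^{y_+}L_y(t)\,dy
\]
is an integral over a fixed interval of functions convex in $t$, and is therefore convex in $t$, which proves the lemma.

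The only genuinely new ingredient is the lifting, which converts ``translate $S_1$ with constant velocity'' into ``apply to a fixed convex body a family of linear maps carrying only an affine $t$–dependence''; after that the argument is essentially bookkeeping. The points that deserve a line of care are the two elementary facts invoked — that the convex hull of a union of convex sets is the union of their Minkowski interpolates, and that the shadow of $K_t$ on the $y$–axis is the same for all $t$ — together with the remark that all sets involved are compact, so Fubini and the finiteness of the integrals are unproblematic; degenerate cases (zero velocity, or $S_1,S_2$ of dimension $<2$) need no separate treatment.
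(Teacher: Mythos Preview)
The paper does not supply a proof of this lemma at all: it is quoted verbatim from F\'ary and R\'edei \cite{fary1950zentralsymmetrische} and used as a black box. So there is nothing to compare your argument against in the paper itself.

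That said, your proof is correct and self-contained. The lifting idea --- trading the moving pair $(S_1+(t,0),S_2)$ for a fixed three-dimensional convex body $W=\mathrm{conv}\big((S_1\times\{1\})\cup(S_2\times\{0\})\big)$ and a family of linear projections $\Phi_t(x,y,\lambda)=(x+\lambda t,y)$ --- is the right move: it isolates the $t$-dependence in a single affine parameter. After slicing by $y$, the length $L_y(t)$ of the image of the convex slice $W_y$ under the linear form $(x,\lambda)\mapsto x+\lambda t$ is a sum of a supremum and a minus-infimum of affine functions of $t$, hence convex; the $y$-range is $t$-independent because $\Phi_t$ fixes $y$; and compactness handles all measurability and finiteness issues in the Fubini step. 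Nothing is missing.

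Compared with the original F\'ary--R\'edei argument (which is formulated in arbitrary dimension and proceeds through the structure of the convex-hull function rather than by an explicit lift), your proof is more elementary and tailored to the planar case, which is all the present paper needs. The trade-off is that the higher-dimensional statement would require iterating the slicing, whereas the original gives it in one stroke.
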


\begin{cor}\label{lem:coorconv}
Function $f$ is a convex function in each of the coordinates $x_{1},y_{1},x_{2},y_{2}$.
\end{cor}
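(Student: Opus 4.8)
The plan is to reduce Corollary \ref{lem:coorconv} directly to Lemma \ref{lem:fary} by fixing all parameters except one coordinate and interpreting the variation of that coordinate as a rigid translation of one of the three constituent bodies along a coordinate axis with unit velocity. Concretely, fix $y_1,x_2,y_2,\theta$ and view $f$ as a function of $x_1$ alone. As $x_1$ varies, the vertices $R_1,R_2,R_3,R_4$ given in the excerpt all change by exactly $(x_1,0)$ relative to their values at $x_1=0$, so the rectangle $R$ is translated along the $X$-axis with constant velocity, while $F$ and $T$ stay fixed. Hence $R$ moves rigidly along a line while $F\cup T$ is stationary.

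The one genuine wrinkle is that Lemma \ref{lem:fary} is stated for the convex hull of \emph{two} convex sets $S_1,S_2$, whereas here we have three sets $F$, $R$, $T$. The fix is to set $S_1=R$ and $S_2=\mathcal{H}(F\cup T)$, the convex hull of the (fixed) union of $F$ and $T$. Since $F$ and $T$ are convex polygons and do not move when $x_1$ varies, $S_2$ is a fixed bounded convex set, and $S_1$ is a fixed bounded convex set being translated along the $X$-axis with constant velocity. Moreover $\mathcal{H}(F\cup R\cup T)=\mathcal{H}(S_1\cup S_2)$ because the convex hull of a union depends only on the union of the convex hulls of the pieces. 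Therefore $\mathcal{A}(X)=\mathrm{vol}\,\mathcal{H}(S_1\cup S_2)$, and Lemma \ref{lem:fary} gives convexity of this quantity as a function of the translation time, i.e. as a function of $x_1$. The same argument applies verbatim to $y_1$ (translating $R$ along the $Y$-axis, with $F\cup T$ fixed), to $x_2$ (translating $T$ along the $X$-axis, with $F\cup R$ fixed, since each vertex $T_i$ shifts by $(x_2,0)$), and to $y_2$ (translating $T$ along the $Y$-axis). Note the statement is deliberately restricted to the four translational coordinates and makes no claim about $\theta$, since rotation of $T$ is not a translation and Lemma \ref{lem:fary} does not apply to it.

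I would write the proof as a single short paragraph: pick one coordinate, say $x_1$; observe from the explicit vertex formulas that increasing $x_1$ translates $R$ rigidly along the $X$-axis at unit speed while leaving $F$ and $T$ in place; apply Lemma \ref{lem:fary} with $S_1=R$ and $S_2=\mathcal{H}(F\cup T)$, using $\mathcal{H}(F\cup R\cup T)=\mathcal{H}(R\cup\mathcal{H}(F\cup T))$; conclude convexity in $x_1$; and remark that the cases of $y_1,x_2,y_2$ are identical with the roles of the moving body and the axis adjusted accordingly. There is essentially no hard step here — the only thing to be careful about is the bookkeeping that the three-body convex hull really is the two-body convex hull of the moving piece against the hull of the stationary pieces, and that the parametrization in the excerpt is genuinely an affine (hence constant-velocity) function of each coordinate, which is immediate from the displayed coordinates of $R_1,\dots,R_4$ and $T_1,T_2,T_3$.
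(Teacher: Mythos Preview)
Your proposal is correct and follows essentially the same approach as the paper: apply Lemma~\ref{lem:fary} with $S_1=R$ and $S_2=\mathcal{H}(F\cup T)$ for the coordinate $x_1$, and handle $y_1,x_2,y_2$ analogously. Your write-up is in fact more careful than the paper's, since you make explicit the identity $\mathcal{H}(F\cup R\cup T)=\mathcal{H}(R\cup\mathcal{H}(F\cup T))$ and the fact that the vertex formulas are affine in each coordinate.
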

\begin{proof}
Convexity of $f$ with respect to $x_1$ follows from Lemma \ref{lem:fary} with $S_2$ being the convex hull of $F$ and $T$, while $S_1=R$ moving along the X axis. Convexity of $f$ with respect to $y_1$, $x_2$, and $y_2$ follows from Lemma \ref{lem:fary} in a similar way.  
\end{proof}
\begin{lem}\label{lem:conditions}
Let $Z$ be a region of points $z=(x_{1},y_{1},x_{2},y_{2},\theta)$ in ${\mathbb R}^5$ satisfying the inequalities
$$
0\leq x_{1}\leq0.05,\,\, 0\leq y_{1}\leq0.04,\,\, -0.17\leq x_{2}\leq0.17,\,\, -0.13\leq y_{2}\leq 0.13,\,\, 0\leq\theta\leq{\displaystyle \frac{2\pi}{3}}.
$$
If $f(z) > 0.0975$ for all $z \in Z$, then in fact $f(z)> 0.0975$ for all $z \in {\mathbb R}^5$.
\end{lem}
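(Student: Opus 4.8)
The plan is to reduce the range of each of the five parameters to the compact box $Z$ by exploiting symmetries together with the convexity of $f$ in each coordinate (Corollary~\ref{lem:coorconv}). First I would dispose of the angular parameter: since $T$ is an equilateral triangle, rotating it by $\tfrac{2\pi}{3}$ about its center $T_0$ permutes its vertices and hence leaves the configuration $X$, and therefore $\mathcal{A}(X)$, unchanged; thus $f$ is periodic in $\theta$ with period $\tfrac{2\pi}{3}$, and it suffices to restrict to $0 \leq \theta \leq \tfrac{2\pi}{3}$. Next, the choice of coordinate axes already lets us assume $x_1 \geq 0$ and $y_1 \geq 0$ (reflecting the whole picture across the axes if necessary), so the lower bounds $x_1 \geq 0$ and $y_1 \geq 0$ are free. (One should check that these two reflections are compatible with the stated $\theta$-range, or simply absorb any induced change of $\theta$ back into the period.)

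The substance of the argument is the five upper bounds $x_1 \leq 0.05$, $y_1 \leq 0.04$, $|x_2| \leq 0.17$, $|y_2| \leq 0.13$. For each coordinate I would argue by the standard convexity/monotonicity principle: fix the other four parameters and regard $f$ as a convex function of the single coordinate in question, which by Corollary~\ref{lem:coorconv} it is. A convex function on $\mathbb{R}$ that is bounded is constant, and in any case a convex function is eventually monotone in each direction; so if the configuration with, say, $x_1$ very large still has its rectangle $R$ sticking out far beyond the convex hull of $F \cup T$, then pushing $R$ still farther only increases the area. Concretely, I would show that once $x_1 > 0.05$ the rightmost vertices $R_2, R_3$ of $R$ lie strictly outside $\mathcal{H}(F \cup T)$ — using $u = 0.0375$, the half-length $\tfrac14(\tfrac12 - u) \approx 0.1156$ of $R$, the radius $r = \tfrac1{2\pi} \approx 0.159$ of $C$, and the fact that $T$ has circumradius $\tfrac{\sqrt3}{9} \approx 0.192$ — so that $R$ is a "free" translate in the sense of Lemma~\ref{lem:fary}: moving it further along the $X$ axis strictly increases $\mathcal{A}(X)$ by convexity, because the convex-hull area is then a non-constant convex function of $x_1$ that is increasing at $x_1 = 0.05$. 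Hence any minimizing or near-minimizing configuration with $f \leq 0.0975$ must have $x_1 \leq 0.05$; contrapositively, if $f > 0.0975$ throughout $Z$ then $f > 0.0975$ for all larger $x_1$ as well. The same reasoning, applied coordinate by coordinate, yields the remaining four bounds, the only changes being the elementary geometry that pins down the threshold value (e.g.\ how far $T$ may be translated horizontally or vertically before two of its vertices are simultaneously extreme in $\mathcal{H}(F \cup R)$).

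The main obstacle is the bookkeeping in these threshold estimates: to invoke Lemma~\ref{lem:fary} in its "moving set is already a strict outlier'' form, one must verify — uniformly over the other four parameters, not just for one convenient configuration — that at the boundary value of the coordinate the relevant vertices of the moving body ($R$ or $T$) are genuinely on the boundary of the global convex hull and that the hull area is strictly increasing there. For $x_2, y_2$ this is slightly delicate because $T$ can be rotated, so one must take a worst-case over $\theta \in [0, \tfrac{2\pi}{3}]$ of the circumradius projection $\tfrac{\sqrt3}{9}\max(|\cos\theta|, |\cos(\theta \pm \tfrac{2\pi}{3})|)$ and similarly for the $\sin$ terms, and combine this with the extent of $F$ (radius $r$, essentially, since a $500$-gon is within $10^{-4}$ of the circle) and of $R$ (which itself may be translated within its own allowed box). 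I would handle this by choosing the numerical thresholds $0.05, 0.04, 0.17, 0.13$ with deliberate slack relative to the naive geometric bounds $r$, $r + u/2$, $\tfrac{\sqrt3}{9}$, etc., so that the strict-outlier condition holds with room to spare and the verification reduces to a handful of one-variable inequalities rather than a tight optimization.
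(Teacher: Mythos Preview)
Your reduction of $\theta$ by periodicity and of the signs of $x_1,y_1$ by symmetry is fine, but the core of your argument --- using convexity of $f$ in a single coordinate together with a ``sticking out'' condition to force monotonicity at the boundary value --- has a real gap. Convexity of $g(x_1)=f(x_1,y_1,x_2,y_2,\theta)$ only yields $g(x_1)\geq g(0.05)$ for $x_1>0.05$ if you know $g$ is already nondecreasing at $0.05$, and your geometric criterion for that (the right-hand vertices $R_2,R_3$ lie outside $\mathcal{H}(F\cup T)$) does \emph{not} hold uniformly in the other parameters. For instance, with $x_2$ near $0.17$ the triangle can reach $x$-coordinate $0.17+\tfrac{\sqrt3}{9}\approx 0.3625$, well to the right of $R_2$'s $x$-coordinate $0.28125$; in such configurations $R$ need not be extreme on the right at $x_1=0.05$, and $g$ may still be decreasing there. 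The same failure occurs for your treatment of $x_2,y_2$: you cannot bound the position of the minimizer of the one-variable convex function uniformly over all positions of the other bodies, and no amount of ``slack'' at the stated thresholds $0.05,0.04,0.17,0.13$ fixes this, because the other parameters are still unrestricted at the moment you try to run the argument.

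The paper sidesteps this entirely by \emph{not} trying to compare $f$ at the boundary of $Z$ with $f$ inside $Z$. For $x_1,y_1$ it drops $T$ and works with the smaller function $\psi(x_1,y_1)=\mathcal{A}(F\cup R)\leq f$, which depends on only two parameters and is symmetric under $x_1\mapsto -x_1$ and $y_1\mapsto -y_1$; convexity plus symmetry then gives $\psi(x_1,y_1)\geq\psi(0.05,0)$ whenever $x_1\geq 0.05$ and $\psi(x_1,y_1)\geq\psi(0,0.04)$ whenever $y_1\geq 0.04$, and those two specific values are checked to exceed $0.0975$. For $x_2,y_2$ the paper abandons convexity altogether and gives direct geometric lower bounds on $\mathcal{A}(X)$: when $|x_2|\geq 0.17$ it inscribes a trapezoid through the centers of $F$ and the incircle of $T$, plus two half-disks; when $|y_2|\geq 0.13$ it uses the convex hull of one point of $T$, the four rectangle vertices, and one point of $F$. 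Both estimates are independent of the remaining parameters, which is exactly what your approach lacks.
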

\begin{proof}
Let $\psi(x_{1}, y_{1})$ be the area of convex hull of $F$ and $R$ only. Lemma \ref{lem:fary} implies that $\psi(x_{1}, y_{1})$ is a convex function in both coordinates. 
Assume that $x_1 \geq 0.05$.
By symmetry, $\psi(x_{1}, y_{1})=\psi(x_{1}, -y_{1})$, hence 
$$
\psi(x_{1}, y_{1}) \geq \psi(x_{1}, 0), \quad \forall x_{1}, y_{1}.
$$
Also, by symmetry, $\psi(x_{1}, 0)=\psi(-x_{1}, 0)$, hence convexity of $\psi(x_{1}, 0)$ implies that $\psi(x_{1}, 0) \geq \psi(0, 0)$, and that $\psi(x_{1}, 0)$ is non-decreasing in $x_1$ for $x_1 \geq 0$. Hence, $x_1 \geq 0.05$ implies that
$$
\psi(x_{1}, y_{1}) \geq \psi(x_{1}, 0) \geq \psi(0.05, 0) > 0.0975,
$$  
where the last equality is verified directly. For similar reasons,
$$
\psi(x_{1}, y_{1}) \geq \psi(0, y_{1}) \geq \psi(0, 0.04) > 0.0975,
$$ 
whenever $y_{1} \geq 0.04$.

From symmetry, we may assume that $x_1 \geq 0$ and $y_1 \geq 0$. Hence, either $f(z)> 0.0975$, or we may assume that $0\leq x_{1}\leq 0.05$, and $0\leq y_{1}\leq 0.04$.

Next, assume that $|x_{2}|\geq 0.17$. Then $\sqrt{x_{2}^{2}+y_{2}^{2}} \geq |x_{2}|\geq 0.17$. Let $C_{1}$ be the incircle of $T$ with radius $\frac{\sqrt{3}}{18}$ and center $(x_{2},y_{2})$, see Figure \ref{fig:f2}.
\begin{figure}[h]
\begin{centering}
\includegraphics[scale=0.4]{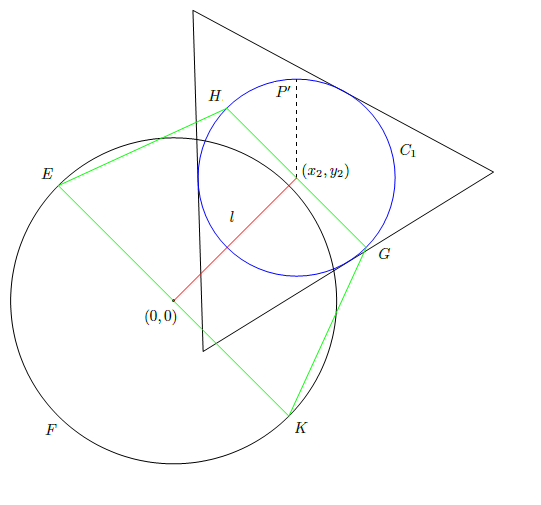}
\par\end{centering}

\caption{$\mathcal{A}(X)$ is bounded by the area of $EKGH, S(F)/2$ and the semicircle $C_{1}$}
\label{fig:f2}
\end{figure}

 Let $l$ be the line segment between $(0,0)$ and $(x_{2},y_{2})$. Next, let points $H,G \in C_1$ and $E,K \in F$ be such that line segments 
$HG$ and $EK$ are perpendicular to $l$, and pass through $(x_{2},y_{2})$ and $(0,0)$, respectively, see Figure \ref{fig:f2}. Then $EKGH$ is trapezoid with base lengths $|HG|=\frac{\sqrt{3}}{9}$ and $|EK|\geq 2r \cos\left(\frac{\pi}{500}\right)$, where $r=\frac{1}{2\pi}$. The area of $F$ is $S(F)=500\frac{r^2}{2}\sin\left(\frac{2\pi}{500}\right)$.
Thus, 
$$
\mathcal{A}(X)>{\displaystyle \frac{1}{2}\left(2r \cos\left(\frac{\pi}{500}\right)+\frac{\sqrt{3}}{9}\right)\left(\sqrt{x_{2}^{2}+y_{2}^{2}}\right)+\frac{S(F)}{2}+\frac{\pi\left(\frac{\sqrt{3}}{18}\right)^{2}}{2}} >0.0975
$$
To prove the bound for $y_2$, we need the following claim.

{\bf Claim 1.} If there is a point $P' \in T$ with $y$-coordinate $y^* \geq 0.13 +\frac{\sqrt{3}}{18}$, then $f(z) > 0.0975$.

Indeed, let $R'=(0,-r)$, and $R_{1},R_{2},R_{3},R_{4}$ be the vertices of the rectangle, see Figure \ref{fig:f3}. 

\begin{figure}[h]
\begin{centering}
\includegraphics[scale=0.5]{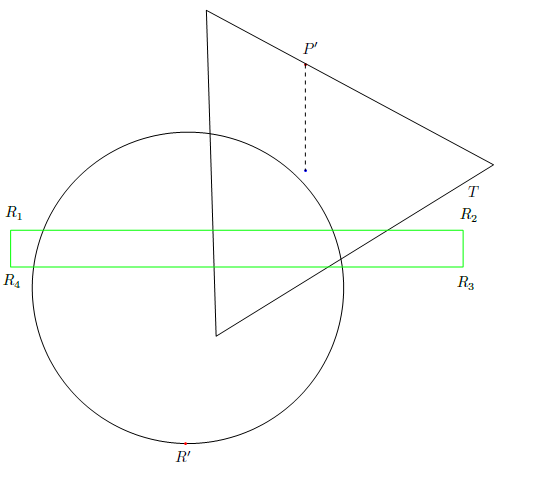}
\par\end{centering}

\caption{$\mathcal{A}(X)$ is bounded by the area of convex hull of $R',R_1,R_2,R_3,R_4$ and $P'$}
\label{fig:f3}
\end{figure}

Because $P',R_{1},R_{2},R_{3},R_{4},R'\in\mathcal{H}(X)$, we have  
$\mathcal{A}(X)>\mathcal{A}(\{P',R_{1},R_{2},R_{3},R_{4},R'\})$
$=u(\frac{1}{2}-u)+\frac{1}{2}(\frac{1}{2}-u)((y_2+\frac{\sqrt{3}}{18})+r-u)>0.0975$, and the claim follows.

Now, assume that $y_2 \geq 0.13$. Let $C_{1}$ be the same circle as above, see Figure \ref{fig:f2}, and let $P'$ be a point on $C_{1}$ with coordinates $(x_2, y_2+\frac{\sqrt{3}}{18})$. Then $P' \in T$, and $f(z) > 0.0975$ by the claim.
 
The cases $x_{2}\leq -0.17$ and $y_2 \leq -0.13$ are considered similarly.
\end{proof}
\begin{cor}\label{lem:bigrange}
Either $f(z)> 0.0975$, or $F \cup T \cup R$ is a subset of a rectangle with side lengths $0.386 \times 0.644$. 
\end{cor}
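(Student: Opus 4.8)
The plan is to first apply Lemma~\ref{lem:conditions} to reduce to the case $z=(x_1,y_1,x_2,y_2,\theta)\in Z$, since otherwise $f(z)>0.0975$ and there is nothing to prove. It then suffices to show that, under the constraints defining $Z$, the difference between the largest and smallest $x$-coordinates of points of $X=F\cup R\cup T$ is at most $0.644$, and the analogous difference for $y$-coordinates is at most $0.386$; the axis-parallel rectangle spanned by these extremes is then the desired $0.386\times 0.644$ cover. I would use throughout that $F$ lies in the closed disk of radius $r=\frac{1}{2\pi}$ about the origin, that $R$ is the axis-parallel rectangle centered at $(x_1,y_1)$ with half-width $\frac{v}{2}=0.23125$ in $x$ and $\frac{u}{2}=0.01875$ in $y$, and that $T$ lies in the disk of radius $\frac{\sqrt3}{9}$ about $(x_2,y_2)$ while, being an equilateral triangle of side $\frac13$, having width at most $\frac13$ in every direction.

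For the $x$-direction I would argue directly. Since $x_1\ge 0$, the right edge of $R$ sits at $x_1+0.23125\ge 0.23125>r$ and its left edge at $x_1-0.23125\le -0.18125<-r$, so $F$ never realizes a horizontal extreme of $X$; hence the largest $x$-coordinate in $X$ equals $\max(x_1+0.23125,\ x_2+\frac{\sqrt3}{9})$ and the smallest equals $\min(x_1-0.23125,\ x_2-\frac{\sqrt3}{9})$. Their difference is the largest of the four quantities $v=0.4625$, $\frac{2\sqrt3}{9}$, $(x_1-x_2)+0.23125+\frac{\sqrt3}{9}$, and $(x_2-x_1)+0.23125+\frac{\sqrt3}{9}$; using $0\le x_1\le 0.05$ and $|x_2|\le 0.17$, each of these is at most $0.6437<0.644$. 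The point is that $T$ cannot attain its circumradius both on the left and on the right, while $R$ is displaced from the origin by at most $x_1\le 0.05$.

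For the $y$-direction the crude bounding box is too large, so I would invoke the inequality behind Claim~1 in the proof of Lemma~\ref{lem:conditions}, together with its mirror image: if some point of $T$ has $y$-coordinate $\ge 0.13+\frac{\sqrt3}{18}$ or $\le -(0.13+\frac{\sqrt3}{18})$, then $f(z)>0.0975$. Writing $y_T^+$ and $y_T^-$ for the largest and smallest $y$-coordinates among points of $T$, we may thus assume $y_T^+<0.13+\frac{\sqrt3}{18}$ and $y_T^->-(0.13+\frac{\sqrt3}{18})$. Since $0\le y_1\le 0.04$, the $y$-range of $R$ lies inside $[-r,r]$, so the vertical extent of $X$ is at most $\max(r,y_T^+)+\max(r,-y_T^-)$. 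If the $y$-range of $T$ lies in $[-r,r]$ this is $2r<0.32$; if $T$ protrudes past that interval on exactly one side it is at most $(0.13+\frac{\sqrt3}{18})+r=0.38538$; and if $T$ protrudes on both sides it equals $y_T^+-y_T^-$, which is at most $\frac13$ by the width bound for the equilateral triangle. In every case the vertical extent is strictly below $0.386$.

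The routine part is the arithmetic with the explicit constants, and the only genuinely structural step is the $y$-direction case split: once $T$ pokes out past the circle on one side, its bounded width $\frac13$ prevents it from also reaching far on the other side, so the worst case is ``$T$ pokes out on exactly one side'', which is precisely where the constant $0.13+\frac{\sqrt3}{18}$ from Claim~1 is needed. I expect no real obstacle beyond organizing that case analysis, though the final margins ($0.6437$ against $0.644$ and $0.38538$ against $0.386$) are tight and should be checked with care.
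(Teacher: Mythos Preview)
Your proposal is correct and follows essentially the same approach as the paper: reduce to $z\in Z$ via Lemma~\ref{lem:conditions}, use the fact that $R$ cannot realize the vertical extremes and $F$ cannot realize the horizontal extremes, then combine the circumradius bound $\frac{\sqrt3}{9}$ for $T$, the width bound $\frac13$ for $T$, and Claim~1 (with its mirror) in the obvious case split. One cosmetic point: where you write that the largest $x$-coordinate in $X$ ``equals $\max(x_1+0.23125,\ x_2+\frac{\sqrt3}{9})$'' you should write ``is at most'', since the rightmost vertex of $T$ need not be at distance $\frac{\sqrt3}{9}$ from $T_0$ in the $x$-direction; this does not affect the argument, as you only need the upper bound.
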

\begin{proof}
By Lemma \ref{lem:conditions}, we can assume that $z = (x_{1},y_{1},x_{2},y_{2},\theta) \in Z$. 
Let $Y_1$ and $Y_2$ be the points of configuration $X=F \cup T \cup R$ with the lowest and highest $y$-coordinates $y^*_1$ and $y^*_2$, respectively. Because $0\leq y_{1}\leq0.04$, $Y_1$ and $Y_2$ cannot belong to the rectangle $R$. If they both belong to $F$, then $y^*_2 - y^*_1 = \frac{1}{\pi} < 0.386$. If they both belong to the triangle, then $y^*_2 - y^*_1 \leq \frac{1}{3} < 0.386$. If $Y_1 \in F$ and $Y_2 \in T$, then $y^*_1 = - \frac{1}{2\pi}$, and the inequality $y^*_2 \leq 0.13+\frac{\sqrt{3}}{18}$ follows form the Claim 1 in the proof of Lemma \ref{lem:conditions}. Then $y^*_2-y^*_1 \leq  0.13+\frac{\sqrt{3}}{18} + \frac{1}{2\pi} < 0.386$.

Similarly, Let $X_1$ and $X_2$ be the points of configuration $X=F \cup T \cup R$ with the lowest and highest $x$-coordinates $x^*_1$ and $x^*_2$, respectively. $z \in Z$ implies that neither $X_1$ nor $X_2$ belongs to $F$. If $X_1 \in T$ and $X_2 \in R$, then, by Lemma \ref{lem:conditions} $x^*_1 \geq -0.17 - \frac{\sqrt{3}}{9}$, and $x^*_2 \leq 0.05 + \frac{0.4625}{2}$, hence $x^*_2 - x^*_1 \leq 0.05 + \frac{0.4625}{2} + 0.17 + \frac{\sqrt{3}}{9} < 0.644$. 
\end{proof}

The following lemma established Lipschitz continuity of $f$ in $Z$.

\begin{lem}\label{lem:cont}
For every $(x_{1},y_{1},x_{2},y_{2},\theta) \in Z$, and any $\epsilon_i \geq 0$, $i=1,\dots,5$,
$$
|f(x_{1}+\epsilon_{1},y_{1}+\epsilon_{2},x_{2}+\epsilon_{3},y_{2}+\epsilon_{4},\theta+\epsilon_{5})-f(x_{1},y_{1},x_{2},y_{2},\theta)| \leq \sum_{i=1}^5 \epsilon_i C_i,
$$

with constants 
$C_{1}=0.212$, $C_{2}=0.322$ , $C_{3}=0.326$ , $C_{4}=0.398$, and $C_{5}=0.134$.

\end{lem}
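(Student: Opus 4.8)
The plan is to establish the Lipschitz bound coordinate by coordinate, exploiting the fact that the convex hull of a configuration changes in a controlled way when we perturb a single parameter. By the triangle inequality it suffices to bound the change in $f$ when each coordinate is moved separately, so I would prove five one-variable estimates of the form $|f(\dots, t+\epsilon, \dots) - f(\dots, t, \dots)| \leq \epsilon C_i$ and then sum them. For the translation parameters $x_1, y_1$ (moving the rectangle $R$) and $x_2, y_2$ (moving the triangle $T$) the key observation is geometric: when a convex body $S_1$ is translated by a vector of length $\epsilon$ relative to a fixed convex body $S_2$, the area of $\mathrm{conv}(S_1 \cup S_2)$ changes by at most $\epsilon \cdot w$, where $w$ is the width of the ``shadow boundary'' — more precisely, the symmetric difference of the two hulls is contained in a strip of width $\epsilon$ whose length is controlled by the diameter of the configuration in the direction perpendicular to the motion. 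Since Corollary \ref{lem:bigrange} confines the whole configuration (outside the already-resolved region) to a $0.386 \times 0.644$ rectangle, I can bound the relevant transversal extent: for motion along the $X$-axis the relevant height is at most $0.386$ plus the contribution of the moving body itself, and for motion along the $Y$-axis the relevant width is at most $0.644$ plus the moving body's extent.

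Concretely, for $x_1$: translating $R$ by $\epsilon_1$ along the $X$-axis, the new hull is contained in the old hull plus a rectangle of width $\epsilon_1$ and height equal to the $y$-extent of $R$, which is $u = 0.0375$; but one must also account for the "new" part of the hull formed between the translated $R$ and the fixed part $F \cup T$, which lives in a strip of width $\epsilon_1$ and height bounded by the full configuration height $0.386$. Being careful, the added area is at most $\epsilon_1 \cdot 0.386$, but the constant $0.212$ in the statement is smaller, so the estimate must be sharpened: the strip picked up on the leading edge has height only as large as the portion of the hull boundary ``facing'' the direction of motion, which for a configuration of height $\le 0.386$ contributes at most roughly half when the body is centrally placed, and one refines using the actual position constraints from $Z$ (e.g. $0 \le x_1 \le 0.05$, $0 \le y_1 \le 0.04$). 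I would derive each $C_i$ by writing the difference of areas as an integral of a support-function difference and bounding the integrand using the box from Corollary \ref{lem:bigrange} together with the explicit dimensions of $F$, $R$, $T$; this yields $C_1, C_2, C_3, C_4$. For the rotation parameter $\theta$: rotating $T$ about its center $T_0$ by an angle $\epsilon_5$ moves each vertex of $T$ along an arc of radius $\frac{\sqrt{3}}{9}$ (the circumradius), hence by at most $\frac{\sqrt{3}}{9}\epsilon_5$; since the three vertices move and the hull of the configuration is determined by $F$, $R$ and these three moving points, the area changes by at most $3 \cdot \frac{\sqrt{3}}{9}\epsilon_5 \cdot (\text{relevant lever})$, but a cleaner bound treats $\mathrm{conv}(F \cup R \cup T)$ as moving each boundary point of $T$ and uses that $T$ has diameter $\frac{1}{3}$; matching the stated $C_5 = 0.134$ requires noting the arc displacement is perpendicular to the radius and that the net area change telescopes.

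The main obstacle I anticipate is getting the constants as small as stated rather than merely finite — a crude ``strip of width $\epsilon$ times full diameter'' argument gives constants roughly double what is claimed ($0.386$, $0.644$, etc., versus $0.212$, $0.322$, …). The sharpening comes from two sources: first, only the \emph{forward} face of the hull in the direction of motion contributes new area (the trailing face loses area, but since we take absolute value and the two effects are of the same order we must argue more carefully — in fact Lemma \ref{lem:fary} tells us $f$ is convex in each translation coordinate, so its one-sided derivatives are monotone and we can bound the local Lipschitz constant by the derivative at the far endpoint of the allowed interval, which is where the configuration is ``stretched'' least efficiently). Second, for the rotation one must verify that the worst case over $\theta \in [0, 2\pi/3]$ and over positions in $Z$ still respects $C_5 = 0.134$; this reduces to bounding $\partial f/\partial\theta$, which equals a sum over the active vertices of $T$ on the hull boundary of (edge length adjacent to that vertex) times (component of the vertex velocity normal to that edge), each velocity having magnitude $\frac{\sqrt{3}}{9}\approx 0.192$, and the edge lengths being controlled by the $0.386 \times 0.644$ box. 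I would carry this out by a short case analysis on which of $T$'s vertices are extreme, bounding each term, and summing; the arithmetic is routine once the geometric picture is fixed.
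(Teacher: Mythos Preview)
Your framework is right---coordinate-by-coordinate via the triangle inequality, convexity from Lemma~\ref{lem:fary} for the translations, and vertex displacement $\frac{\sqrt{3}}{9}\epsilon_5$ for the rotation---and this is exactly how the paper proceeds. But two of the sharpening mechanisms you propose are not the ones that actually deliver the stated constants, and your substitutes would fall short numerically.

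For $C_1,C_3,C_4$ the paper does \emph{not} bound the one-sided derivative at the endpoint of the interval in $Z$. Instead it uses a global fact about convex functions: if $g$ is convex on $\mathbb R$ and $C=\max\bigl(\lim_{t\to-\infty}g(t)/t,\ \lim_{t\to+\infty}g(t)/t\bigr)$, then $|g(t+\epsilon)-g(t)|\le C\epsilon$ for all $t$. The asymptotic slope is easy to read off geometrically: when the moving body is sent to infinity along a line, the hull looks like a trapezoid, so the slope is $\tfrac12(w_{\mathrm{mov}}+w_{\mathrm{fix}})$, where the widths are measured perpendicular to the motion. This is where the factor $\tfrac12$ you were hunting for comes from, and it gives e.g.\ $C_1\le(0.386+0.0375)/2$, $C_3\le(2r+\tfrac13)/2$, $C_4\le(\tfrac13+0.4625)/2$. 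Your ``derivative at the far endpoint of $Z$'' would require evaluating $\partial f/\partial x_i$ at a specific boundary point uniformly over the other four parameters, which is both harder and not obviously sharp enough.

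The paper handles $C_2$ by a \emph{different} argument: the asymptotic-slope method would give roughly $(0.4625+w_x(F\cup T))/2$, which exceeds $0.322$. Instead one bounds the area change directly for small $\epsilon_2$ by the three thin triangles created at the moving vertices of $R$; their heights $h_1+h_2+h_3$ are bounded by the $x$-extent $0.644$ from Corollary~\ref{lem:bigrange}, giving $\tfrac12\cdot 0.644\cdot\epsilon_2=0.322\,\epsilon_2$. You do not distinguish this case.

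For $C_5$ your plan to control the adjacent edge lengths by the $0.386\times0.644$ box is too weak: the diagonal of that box is about $0.75$, and $3\cdot 0.75\cdot\frac{\sqrt3}{18}\approx 0.217>0.134$. The paper's point is that the hull vertices adjacent to each $T_i$ lie in $F\cup R$ (not in $T$, for small $\epsilon_5$), and the \emph{diameter of $F\cup R$} is at most $|R_2R_4|<0.46402$; this yields $3\cdot 0.46402\cdot\frac{\sqrt3}{18}<0.134$. Without this sharper diameter bound your rotation estimate does not close.
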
 
\begin{proof}
If function $g:{\mathbb R}\to{\mathbb R}$ is convex on ${\mathbb R}$ and
$$
C = \max\left[\lim\limits_{t\to -\infty} \frac{g(t)}{t}, \lim\limits_{t\to +\infty} \frac{g(t)}{t}\right] < \infty,
$$
then 
$$
|g(t+\epsilon)-g(t)| \leq C \epsilon, \quad \forall t, \, \forall \epsilon > 0. 
$$
Indeed, inequality $C':=(g(t_0+\epsilon) - g(t_0))/\epsilon > C$ for some $t_0$ and $\epsilon$, would, by convexity of $g$, imply that $g(t_0+2\epsilon) > g(t_0) + 2C'\epsilon$, and, by induction, $g(t_0+2^n\epsilon) > g(t_0) + 2^nC'\epsilon$, a contradiction with $\lim\limits_{t\to +\infty} \frac{g(t)}{t} \leq C < C'$. Inequality $(g(t_0+\epsilon) - g(t_0))/\epsilon < -C$ leads to a contradiction for similar reasons.

Let us apply this result to convex function $g(x_1) = f(x_{1},y_{1},x_{2},y_{2},\theta)$, where $y_{1},x_{2},y_{2},\theta$ are fixed. 
In this case,
$$
\lim\limits_{x_1\to -\infty} \frac{g(x_1)}{x_1} = \lim\limits_{x_1\to +\infty} \frac{g(x_1)}{x_1} \leq \frac{0.386+0.0375}{2} < C_1,
$$
see Figure  \ref{fig:f4}.
\begin{figure}[h]
\begin{centering}
\includegraphics[scale=0.8]{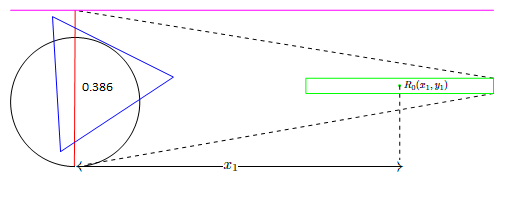}
\par\end{centering}

\caption{The ratio between $g(x_1)$ and $x_1$ when $x_1\to+\infty$}
\label{fig:f4}
\end{figure}

Hence,
$$
|f(x_{1}+\epsilon_{1},y_{1},x_{2},y_{2},\theta)-f(x_{1},y_{1},x_{2},y_{2},\theta)|\leq C_{1}\epsilon_{1}.
$$

Similarly, with $g(x_2) = f(x_{1},y_{1},x_{2},y_{2},\theta)$ for fixed $x_{1},y_{1},y_{2},\theta$, 
$$
\lim\limits_{x_2\to -\infty} \frac{g(x_2)}{x_2} = \lim\limits_{x_2\to +\infty} \frac{g(x_2)}{x_2} \leq \frac{2r+1/3}{2} < C_3,
$$
see Figure \ref{fig:f5},
while with $g(y_2) = f(x_{1},y_{1},x_{2},y_{2},\theta)$,
$$
\lim\limits_{y_2\to -\infty} \frac{g(y_2)}{y_2} = \lim\limits_{y_2\to +\infty} \frac{g(y_2)}{y_2} \leq \frac{1/3 + 0.4625}{2} < C_4.
$$
\begin{figure}[h]
\begin{centering}
\includegraphics[scale=0.7]{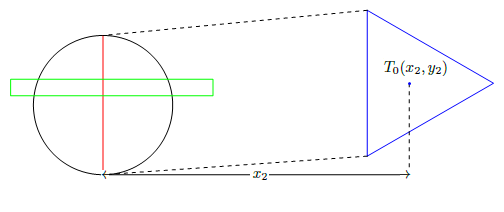}
\par\end{centering}

\caption{The ratio between $g(x_2)$ and $x_2$ when $x_2\to+\infty$}
\label{fig:f5}
\end{figure}

\begin{figure}[h]
\begin{centering}
\includegraphics[scale=0.5]{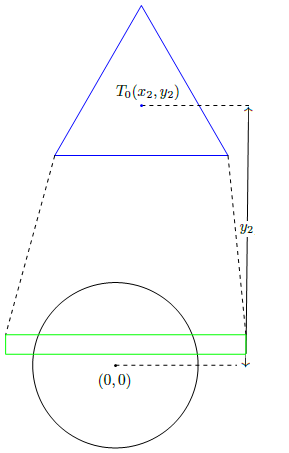}
\par\end{centering}

\caption{The ratio between $g(y_2)$ and $y_2$ when $y_2\to+\infty$}
\label{fig:f6}
\end{figure}

see Figure \ref{fig:f6}. This implies that
$$
|f(x_{1},y_{1},x_{2}+\epsilon_{3},y_{2},\theta)-f(x_{1},y_{1},x_{2},y_{2},\theta)|\leq C_{3}\epsilon_{3},
$$
and
$$
|f(x_{1},y_{1},x_{2},y_{2}+\epsilon_{4},\theta)-f(x_{1},y_{1},x_{2},y_{2},\theta)|\leq C_{4}\epsilon_{4}.
$$

The proof of similar bounds for the second and the fifth coordinates requires a different approach. For the second coordinate, we need to prove that 
\begin{equation}\label{eq:C2}
|f(x_{1},y_{1}+\epsilon_{2},x_{2},y_{2},\theta)-f(x_{1},y_{1},x_{2},y_{2},\theta)|\leq C_{2}\epsilon_{2}.
\end{equation} 
We claim that it is sufficient to prove (\ref{eq:C2}) for $\epsilon_{2}\in (0,\epsilon)$ for some $\epsilon>0$, which can depend on $x_{1},y_{1},x_{2},y_{2}$, and $\theta$. Indeed, assume, by contradiction, that, for some $y_1$, (\ref{eq:C2}) holds for $\epsilon_{2}\in (0,\epsilon)$ but not for all $\epsilon_2>0$. Let $\epsilon^*$ be the supremum of all $\epsilon$ such that (\ref{eq:C2}) holds for $\epsilon_{2}\in (0,\epsilon)$. Then, by continuity of $f$, (\ref{eq:C2}) also holds for $\epsilon_{2} = \epsilon^*$, that is,
$$
|f(x_{1},y_{1}+\epsilon^*,x_{2},y_{2},\theta)-f(x_{1},y_{1},x_{2},y_{2},\theta)|\leq C_{2}\epsilon^*.
$$
Applying (\ref{eq:C2}) to $y'_{1}=y_{1}+\epsilon^*$, we find that
$$
|f(x_{1},y_{1}+\epsilon^*+\delta_{2},x_{2},y_{2},\theta)-f(x_{1},y_{1}+\epsilon^*,x_{2},y_{2},\theta)|\leq C_{2}\delta_{2}
$$     
holds for all $\delta_2 \in (0, \delta)$ for some $\delta>0$. But the last two inequalities imply that (\ref{eq:C2}) holds for all $\epsilon_{2}\in (0,\epsilon^*+\delta)$, a contradiction with the definition of $\epsilon^*$. 

We next prove (\ref{eq:C2}) for $\epsilon_{2}\in (0,\epsilon)$. 
Let $R'$ with vertices $R'_1 R'_2 R'_3 R'_4$ be the rectangle $R$ which moved up by $\epsilon_2$ in Y-axis's direction. Convex hulls $\mathcal{H}(R,F,T)$ and $\mathcal{H}(R',F,T)$ are polygons, and, by selecting $\epsilon$ sufficiently small, we can assume that all vertices of these polygons, which are not vertices of $R$ and $R'$, coincides. Then $\mathcal{A}(R',F,T)-\mathcal{A}(R,F,T)$ is bounded by the total area of three triangles, say $R'_1 R_1 D_1, R'_2 R_2 D_2$, and $R'_2 R_2 T^{*}$ which $D_1,D_2\in F$ and $T^{*}\in T$ see Figure \ref{fig:f7}. Let $h_1,h_2,h_3$ be the height of $R'_1 R_1 D_1, R'_2 R_2 D_2$, $R'_2 R_2 T^{*}$, respectively. By Corollary \ref{lem:bigrange}, $|f(x_1,y_1+\epsilon_2,x_2,y_2,\theta)-f(x_1,y_1,x_2,y_2,\theta)|\leq \frac{1}{2}\epsilon_2(h_1+h_2+h_3)\leq\frac{1}{2}\epsilon_2(0.644)=0.322\epsilon_2 = C_2\epsilon_2$. 

\begin{figure}[h]
\begin{centering}
\includegraphics[scale=0.5]{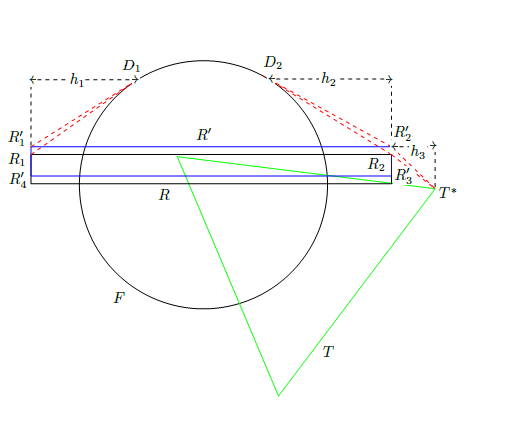}
\par\end{centering}

\caption{The three triangles which increase the area of convex hull with $\epsilon_2$}
\label{fig:f7}
\end{figure}
Finally, we need to prove that 
\begin{equation}\label{eq:C5}
|f(x_{1},y_{1},x_{2},y_{2},\theta+\epsilon_{5})-f(x_{1},y_{1},x_{2},y_{2},\theta)|\leq C_{5}\epsilon_{5}.
\end{equation} 
To prove the bound for $C_5$, we need the following claim.

{\bf Claim 2.} The diameter $d(\mathcal{F\cup R})$ of $\mathcal{F\cup R}$ is less than $0.46402$. 

Indeed, let $R_0=(0.05,0.04)$. We get $R_2=(0.28125,0.05875)$. Let $F_1\in F$ be a point which $d(x,R_2)$ is maximum for all $x\in F$ see Figure \ref{fig:f8}. By direct calculation, $R_2 F_1=0.4465 < |R_2 R_4|$. Hence, the diameter of $\mathcal{F\cup R}$ is $|R_2 R_4| < 0.46402$.
\begin{figure}[h]
\begin{centering}
\includegraphics[scale=0.5]{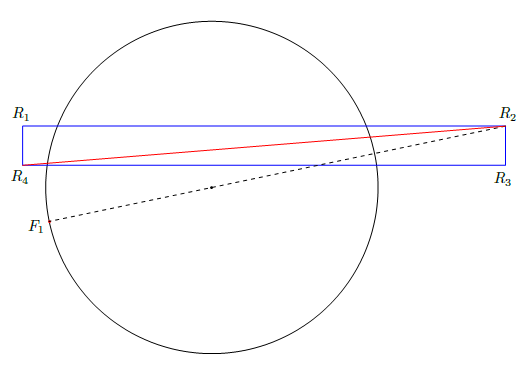}
\par\end{centering}

\caption{The longest between $R_2$ and $F$}
\label{fig:f8}
\end{figure}

Next, we will prove (\ref{eq:C5}). 

Let $T'$ with vertices $T'_1, T'_2, T'_3$ be the triangle $T$ rotated around $T_0$ by angle $\epsilon_{5}$. Then $|T_1T'_1| = 2|T_0T_1|\sin(\epsilon_5/2) < 2|T_0T_1|(\epsilon_5/2)=|T_0T_1|\epsilon_5 = \frac{\sqrt{3}}{9}\epsilon_5$. Similarly, $|T_2T'_2|=|T_3T'_3|< \frac{\sqrt{3}}{9}\epsilon_5$.

By selecting $\epsilon_5$ sufficiently small, we can ensure that all vertices of polygons $\mathcal{H}(R,F,T)$ and $\mathcal{H}(R,F,T')$ coincides, except possibly the vertices of $T$ and $T'$.

We assume that no vertices of the triangle $T$ are adjacent in the convex hull of $\mathcal{H}(R,F,T)$. 
Let $X_1$ and $X_2$ be the vertices of $\mathcal{H}(R,F,T)$ adjacent to $T_1$, $X_3$ and $X_4$ - vertices of $\mathcal{H}(R,F,T)$ adjacent to $T_2$, and $X_5$ and $X_6$ - vertices of $\mathcal{H}(R,F,T)$ adjacent to $T_3$, see Figure \ref{fig:f9}. Let us denote $S(ABC)$ the area of any triangle $ABC$.

Then area difference $|\mathcal{A}(R,F,T')-\mathcal{A}(R,F,T)|$ is equal to 
$$
|(S(T_1X_1X_2) + S(T_2X_3X_4) + S(T_3X_5X_6)) - (S(T'_1X_1X_2) + S(T'_2X_3X_4) + S(T'_3X_5X_6))|.
$$ 
But $|(S(T_1X_1X_2) - S(T'_1X_1X_2)| = |\frac{1}{2}h_1|X_1X_2| - \frac{1}{2}h_2|X_1X_2|| = \frac{1}{2}|X_1X_2|\cdot |h_1-h_2|$, where $h_1$ and $h_2$ are heights of triangles $T_1X_1X_2$ and $T'_1X_1X_2$, respectively, see Figure \ref{fig:f10}. But $|X_1X_2| < 0.46402$ by Claim 2, and
$|h_1-h_2| \leq |T_1T'_1| < \frac{\sqrt{3}}{9}\epsilon_5$, hence $|(S(T_1X_1X_2) - S(T'_1X_1X_2)| < 0.46402 \cdot \frac{\sqrt{3}}{18}\epsilon_5$. The same bound holds for $|(S(T_2X_3X_4) - S(T'_2X_3X_4)|$ and $|(S(T_3X_5X_6) - S(T'_3X_5X_6)|$. Hence,  
$$
|\mathcal{A}(R,F,T')-\mathcal{A}(R,F,T)| < 3 \cdot 0.46402 \cdot \frac{\sqrt{3}}{18}\epsilon_5 < 0.134\epsilon_5 = C_5 \epsilon_5.
$$

\begin{figure}
\begin{centering}
\includegraphics[scale=0.6]{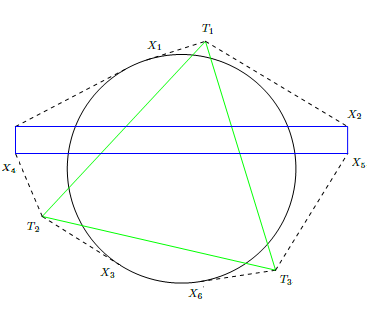}
\par\end{centering}

\caption{Polygon $\mathcal{H}(R, F, T)$ adjacent $T_1,T_2,T_3$}
\label{fig:f9}
\end{figure}

\begin{figure}
\begin{centering}
\includegraphics[scale=0.9]{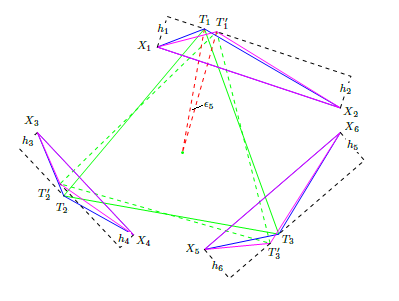}
\par\end{centering}

\caption{Six triangles which $T$ rotated by angle $\epsilon_5$}
\label{fig:f10}
\end{figure}

\end{proof}

\section{Computational results}
\label{sec:2}
In this section, we use box-search algorithm to prove that the minimal value of function $f(z)=f(x_{1},y_{1},x_{2},y_{2},\theta)$ in region $Z$ defined in Lemma \ref{lem:conditions} is grater than $0.0975$.

In general, let $B$ be a box $[a_{1},b_{1}]\times[a_{2},b_{2}]\times[a_{3},b_{3}]\times[a_{4},b_{4}]\times[a_{5},b_{5}]$ (that is, set of points $z=(x_{1},y_{1},x_{2},y_{2},\theta)$ such that $a_1 \leq x_1 \leq b_1$, $a_2 \leq y_1 \leq b_2$, $a_3 \leq x_2 \leq b_3$, $a_4 \leq y_2 \leq b_4$, $a_5 \leq \theta \leq b_5$. Let $z^*=(\frac{a_1+b_1}{2}, \frac{a_2+b_2}{2} , \frac{a_3+b_3}{2}, \frac{a_4+b_4}{2}, \frac{a_5+b_5}{2})$ be the center of the box). Then, if
\begin{equation}\label{eq:maincond}
f(z^*)-d_{1}C_{1}-d_{2}C_{2}-d_{3}C_{3}-d_{4}C_{4}-d_{5}C_{5}\geq 0.0975,
\end{equation}
where $d_i=\frac{b_{i}-a_{i}}{2}, \, i=1,2,3,4,5$, then, by Lemma \ref{lem:cont}, $f(z) \geq 0.0975$ for all $z\in B$.

If the condition (\ref{eq:maincond}) does not hold for $B$, we will divide $B$ into two sub-boxes $B_1$ and $B_2$, by dividing its maximal edge by half. For example, if $b_2-a_2 \geq b_i - a_i$, $i=1,3,4,5$, then edge with length $b_2-a_2$ is the maximal one, and we divide $B$ into 
$B_1=[a_{1},b_{1}]\times[a_{2},(a_2+b_2)/2]\times[a_{3},b_{3}]\times[a_{4},b_{4}]\times[a_{5},b_{5}]$
and
$B_2=[a_{1},b_{1}]\times[(a_2+b_2)/2, b_2]\times[a_{3},b_{3}]\times[a_{4},b_{4}]\times[a_{5},b_{5}]$.

We then check (\ref{eq:maincond}) for $B_1$ and $B_2$. If it holds in both cases, then $f(z) \geq 0.0975$ for all $z\in B_1$ and for all $z \in B_2$, hence $f(z) \geq 0.0975$ for all $z \in B$. If (\ref{eq:maincond}) does not hold for $B_1$ (or for $B_2$, of for both), we divide the corresponding box by two sub-boxes, and proceed iteratively.

$a_{1}=0,b_{1}=0.05,a_{2}=0,b_{2}=0.04,a_{3}=-0.17,b_{3}=0.17,a_{4}=-0.13,b_{4}=0.13,a_{5}=0,b_{5}=\frac{2\pi}{3}$. Then we evaluate $f$ in the box center $z^*=(0.025, 0.02, 0, 0, \pi/3)$ to check whether (\ref{eq:maincond}) holds. In this case, (\ref{eq:maincond}) reduces to
$$
f(z^*) - \frac{0.05}{2}C_1 - \frac{0.04}{2}C_2 - \frac{0.17-(-0.17)}{2}C_3 - \frac{0.13-(-0.13)}{2}C_4 - \frac{2\pi/3}{2}C_5  > 0.0975, 
$$
or equivalently, to $f(z^*) > 0.3567$.
However, the computation show that $f(z^*) \approx 0.10605 < 0.3567$, hence (\ref{eq:maincond}) does not hold. Hence, we need to subdivide $B$ into $B_1$ and $B_2$. In this case, $b_1-a_1=0.05$, $b_2-a_2=0.04$, $b_3-a_3=0.34$, $b_4-a_4=0.26$, and $b_5-a_5=2\pi/3 \approx 1.047$. Hence, $b_5-a_5 > b_i-a_i$, $i=1,2,3,4$, and we divide $B$ into
$B_1=[a_{1},b_{1}]\times[a_{1},b_{2}]\times[a_{3},b_{3}]\times[a_{4},b_{4}]\times[a_5,(a_5+b_5)/2]$
and
$B_2=[a_{1},b_{1}]\times[a_{1},b_{2}]\times[a_{3},b_{3}]\times[a_{4},b_{4}]\times[(a_5+b_5)/2, b_5]$.
Then we repeat the above procedure for $B_1$ and for $B_2$, and proceed iteratively.

We use Matlab$\circledR$ R2016a to implement this algorithm, see Algorithm 1. The actual Matlab code is presented in the Appendix.

\begin{algorithm}

\textbf{Function} checkmin 
$(a_{1},b_{1},a_{2},b_{2},a_{3},b_{3},a_{4},b_{4},a_{5},b_{5},a,r,n,B)$

\textbf{Variables:}
$a_{1},b_{1},a_{2},b_{2},a_{3},b_{3},a_{4},b_{4},a_{5},b_{5}$ - bounds for the box we consider at this iteration. $a$ - the minimal area of the convex hull we have found so far. $r$ - the total area of all boxes for which inequality \ref{eq:maincond} is verified so far (this variable is needed to control the progress). $n$ - the number of iterations so far. $B=0.0975$ is the lower bound we are proving.

\textbf{Procedure:}\\

\noindent 
1: set the coordinates of the box center: 

\textbf{ }$x_{1}:=\frac{a_{1}+b_{1}}{2},$ $y_{1}:=\frac{a_{2}+b_{2}}{2},$ $x_{2}:=\frac{a_{3}+b_{3}}{2},$ $y_{2}:=\frac{a_{4}+b_{4}}{2}$, $\theta:=\frac{a_{5}+b_{5}}{2}$\\
2: set $d_{1}:=\frac{b_{1}-a_{1}}{2},$ $d_{2}:=\frac{b_{2}-a_{2}}{2},$ $d_{3}:=\frac{b_{3}-a_{3}}{2},$ $d_{4}:=\frac{b_{4}-a_{4}}{2},d_{5}:=\frac{b_{5}-a_{5}}{2}$ and set $d:=max\{d_{1},d_{2},d_{3},d_{4},d_{5}\}$\\
3: calculate convex hull area $S$ of the configuration describes by parameters $x_{1},y_{1},x_{2},y_{2},\theta$ using function cvh: Set $S=cvh(x_{1},y_{1},x_{2},y_{2},\theta)$.\\

\noindent 4:\textbf{$\,$if} $S<a$ set $a=S$ (update of the minimal area we have found so far) 

\noindent 5: \textbf{end} \textbf{if};\textbf{}

6:\textbf{$\,$if} $S-C_{1}\cdot d_{1}-C_{2}\cdot d_{2}-C_{3}\cdot d_{3}-C_{4}\cdot d_{4}-C_{5}\cdot d_{5}>B$ (this means that this box is verified)

\noindent 7:$\:$set $r:=r+32\cdot d_{1}\cdot d_{2}\cdot d_{3}\cdot d_{4}\cdot d_{5}$; (add area of this box to $r$)

\noindent 8:$\:$set $\:n=n+1$; (update count for the number of iterations)\\ 

9:\textbf{$\:$return;}\\

10:\textbf{ else} (this means that \ref{eq:maincond} does not hold and we should sub-divide box by two boxes and proceed iteratively)

\textbf{$\quad\quad$switch} $d$ \textbf{do}

$\qquad$\textbf{case} 1

$\qquad\,$\textbf{for} i=0 \textbf{to} 1 \textbf{do}

\noindent 11:$\qquad\;$checkmin$(a_{1}+d_{1}\cdotp i,a_{1}+d_{1}\cdotp(i+1),a_{2},b_{2},a_{3},b_{3},a_{4},b_{4},a_{5},b_{5},a,r,n,B)$

\noindent 12:$\qquad\,$\textbf{end do}

\noindent 13:$\qquad$\textbf{case} 2

\noindent 14:$\qquad\,$\textbf{for} i=0 \textbf{to} 1 \textbf{do}

\noindent 15:$\qquad\;$checkmin$(a_{1},b_{1},a_{2}+d_{2}\cdotp i,a_{2}+d_{2}\cdotp(i+1),a_{3},b_{3},a_{4},b_{4},a_{5},b_{5},a,r,n,B)$

\noindent 16:$\qquad\,$\textbf{end do}

\noindent 17:$\qquad$\textbf{case} 3

\noindent 18:$\qquad\,$\textbf{for} i=0 \textbf{to} 1 \textbf{do}

\noindent 19:$\qquad\;$checkmin$(a_{1},b_{1},a_{2},b_{2},a_{3}+d_{3}\cdotp i,a_{3}+d_{3}\cdotp(i+1),a_{4},b_{4},a_{5},b_{5},a,r,n,B)$

\noindent 20:$\qquad\,$\textbf{end do}

\noindent 21:$\qquad$\textbf{case} 4

\noindent 22:$\qquad\,$\textbf{for} i=0 \textbf{to} 1 \textbf{do}

\noindent 23:$\qquad\;$checkmin$(a_{1},b_{1},a_{2},b_{2},a_{3},b_{3},a_{4}+d_{4}\cdotp i,a_{4}+d_{4}\cdotp(i+1),a_{5},b_{5},a,r,n,B)$

\noindent 24:$\qquad\,$\textbf{end do}

\noindent 25:$\qquad$\textbf{case} 5

\noindent 26:$\qquad\,$\textbf{for} i=0 \textbf{to} 1 \textbf{do}

\noindent 27:$\qquad\;$checkmin$(a_{1},b_{1},a_{2},b_{2},a_{3},b_{3},a_{4},b_{4},a_{5}+d_{5}\cdotp i,a_{5}+d_{5}\cdotp(i+1),a,r,n,B)$

\noindent 28:$\qquad\,$\textbf{end do}

\noindent 29:$\quad\quad$\textbf{end switch}

\noindent 30: \textbf{end} \textbf{if};\textbf{}

\noindent 31: \textbf{if} rem(n,1000000) == 0  print $r/100$ and $n$ (once in million iterations display the progress so far: for what percentage of the initial box the inequality \ref{eq:maincond}) is verified).

\noindent 32: \textbf{end} \textbf{if};\textbf{}

\textbf{end Procedure}

\noindent \textbf{\caption{\textbf{Box search algorithm}}
}

\end{algorithm}

The programme successfully verified the inequality $f(z) > 0.0975$ for all $z \in Z$ after $n = 7,180,439,126$ iterations. The program actually returned the minimal area $0.09762$ for the optimal configuration with $x_{1}=0.0251,\,y_{1}=0.00258,\,x_{2}=0.0653,\,y_{2}=0.00542,\,\theta=0.07989$ see Figure \ref{fig:f11} and Table \ref{fig:table}.

Repeating the calculation for this particular configuration in Mathematica with actual circle instead of $500$-gon shows that the optimal convex hull area is about 
$$
S_{min} \approx 0.09762742.
$$

\section{Main Theorems}

\begin{theorem-non} (Theorem \ref{th:main} in the Introduction)
Any convex set $S$ on the plane which can cover circle of perimeter $1$, equilateral triangle of perimeter $1$, and rectangle of size $0.0375\times0.4625$ (and perimeter $1$) has area at least $0.0975$.
\end{theorem-non}
\begin{proof}
By numerical results, $f(z) > 0.0975$ for all $z \in Z$, where $Z$ is defined in Lemma \ref{lem:conditions}. By Lemma \ref{lem:conditions}, this implies that $f(z) > 0.0975$ for all $z \in {\mathbb R}^5$, hence $\mathcal{A}(F,R,T)>0.0975$. Because $500$-gon $F$ is the subset of circle $C$, $\mathcal{A}(C,R,T) \geq \mathcal{A}(F,R,T)>0.0975$.
\end{proof}

\begin{cor}\label{cor:main}
Any convex cover for closed unit curves has area of at least $0.0975$.
\end{cor}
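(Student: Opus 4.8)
The plan is to read off the corollary from Theorem~\ref{th:main} with essentially no extra work, once two elementary observations are in place. The first observation I would record is that each of the three sets appearing in Theorem~\ref{th:main} is, or is bounded by, a closed plane curve of length exactly $1$: the circle of perimeter $1$ is itself such a curve, the boundary of the equilateral triangle of perimeter $1$ has length $1$, and the rectangle of size $0.0375\times0.4625$ has perimeter $2(0.0375+0.4625)=1$. The second observation is that if a convex set contains a simple closed convex curve, then it contains the entire planar region bounded by that curve; in particular, covering the circle, the triangle boundary, or the rectangle boundary is the same as covering the disk, the filled triangle, or the filled rectangle, respectively.

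Then I would argue as follows. Let $S$ be any convex set that covers every closed plane curve of unit length. Applying the covering property to the three curves above yields rigid motions $g_1,g_2,g_3$ such that $g_1$ places the circle of perimeter $1$ inside $S$, $g_2$ places the boundary of the equilateral triangle of perimeter $1$ inside $S$, and $g_3$ places the boundary of the $0.0375\times0.4625$ rectangle inside $S$. By the second observation $S$ contains the congruent copies $C'=g_1(C)$, $T'=g_2(T)$, $R'=g_3(R)$ of the disk, the filled triangle, and the filled rectangle, and since $S$ is convex, $S\supseteq\mathcal{H}(C'\cup T'\cup R')$. But $C'\cup T'\cup R'$ is precisely a configuration of the kind studied in Section~\ref{sec:1} (normalising coordinates so that $C'$ is centred at the origin and the rectangle is axis-aligned recovers the five parameters $x_1,y_1,x_2,y_2,\theta$), so Theorem~\ref{th:main} gives $\mathrm{area}(S)\ge\mathcal{A}(C'\cup T'\cup R')\ge 0.0975$.

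The one subtlety worth stressing — and the only thing that is not completely routine — is that the cover $S$ is required to contain each of the three curves but in a priori unrelated positions: the motions $g_1,g_2,g_3$ are supplied by $S$ and cannot be prescribed. This is exactly why Theorem~\ref{th:main} is stated, and proved via Lemma~\ref{lem:conditions} and the box search, as a bound valid for \emph{every} placement of the circle, triangle, and rectangle, equivalently for $f(z)$ at every $z\in\mathbb{R}^5$. Hence, whatever $g_1,g_2,g_3$ happen to be, the resulting configuration lies within the range that Theorem~\ref{th:main} already controls, and the bound $0.0975$ applies uniformly. Beyond making the two observations above explicit and invoking the already-proved theorem, there is no further obstacle.
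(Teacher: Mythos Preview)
Your proposal is correct and follows essentially the same approach as the paper: the paper's proof simply notes that any convex cover for closed unit curves must in particular cover the circle $C$, the equilateral triangle $T$, and the rectangle $R$ of size $0.0375\times0.4625$, and then invokes Theorem~\ref{th:main}. Your version is more explicit about why each of these is a closed unit curve, about the boundary-versus-filled-region distinction, and about the fact that the placements $g_1,g_2,g_3$ are arbitrary, but the core argument is identical.
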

\begin{proof}
Because every convex cover for closed unit curves should cover the circle $C$, equilateral triangle $T$, and rectangle $R$ of size $0.0375\times0.4625$, the claim follows from Theorem \ref{th:main}. 
\end{proof}

\begin{figure}[H]
\begin{centering}
\includegraphics[scale=0.55]{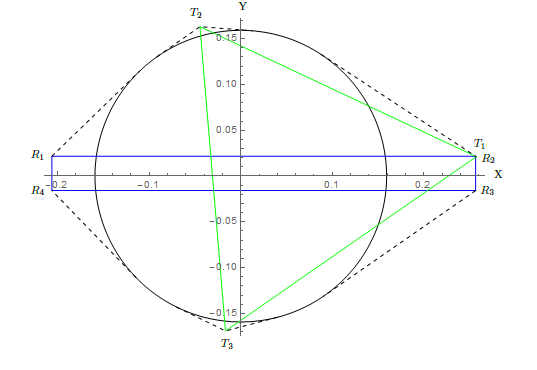}
\par\end{centering}

\caption{The convex hull of the configuration of the minimum
area with $0.097627$ acquired from the box-search algorithm}
\label{fig:f11}
\end{figure} 

The following Theorem implies that this method (with circle, equilateral triangle, and rectangle of perimeter $1$) cannot be used to improve the lower bound in Corollary \ref{cor:main} beyond $0.09763$. 

\begin{thm}\label{thm2}
For any rectangle $R'$ with perimeter $1$, there is a convex cover of $R'$, $C$, and $T$ with area at most $0.09763$.
\end{thm}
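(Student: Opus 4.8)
The plan is to produce, for every rectangle $R'$ of perimeter $1$, an \emph{explicit} placement of $C$, $T$ and $R'$ whose convex hull has area at most $0.09763$, and to control how this area depends on the shape of $R'$ by a one–dimensional continuity argument. First I would use the symmetry $u\times v = v\times u$ to reduce to rectangles $R_u$ with side lengths $u\times(\tfrac12-u)$ and $0\le u\le \tfrac14$, so that the whole family of rectangles is parametrised by a single number $u\in[0,\tfrac14]$, with $R_0$ the segment of length $\tfrac12$ and $R_{1/4}$ the square of side $\tfrac14$. A convenient observation is that, taking $R_u$ centred at the origin with its long side horizontal, its four vertices $(\pm(\tfrac14-\tfrac u2),\pm\tfrac u2)$ move along the four fixed lines $\pm x\pm y=\tfrac14$ at constant speed $\tfrac{\sqrt2}{2}$ as $u$ varies; in particular $R_u$ depends on $u$ in a Lipschitz fashion.

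Next, guided by the near–optimal configuration that the box–search algorithm produced for $u=0.0375$, I would write down an explicit one–parameter family of placements: keep $C$ and $T$ in fixed positions (or let their centres vary piecewise–linearly in $u$), and place $R_u$ with centre $(x_1(u),y_1(u))$ and orientation depending piecewise–linearly on $u$, the data being read off from the numerics — matching the configuration of Figure~\ref{fig:f11} at $u=0.0375$, a suitable near–optimal placement for the segment at $u=0$, and one for the square at $u=\tfrac14$. Let $\tilde g(u)$ be the area of the convex hull of this particular placement; then $\tilde g(u)$ is an upper bound for the minimal area of a convex cover of $C$, $T$ and $R_u$, so it suffices to prove $\tilde g(u)\le 0.09763$ for all $u\in[0,\tfrac14]$.

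The verification of $\tilde g(u)\le 0.09763$ I would carry out exactly as in Lemma~\ref{lem:cont}: along each sub–interval of the piecewise–linear family the vertices of $F$, $T$ and $R_u$ all move affinely and at bounded speed in $u$, and the area of the convex hull of a uniformly bounded finite point set is Lipschitz in those points, so $|\tilde g(u)-\tilde g(u')|\le L\,|u-u'|$ for an explicit constant $L$ coming from the diameter bound of Corollary~\ref{lem:bigrange}. It then remains only to evaluate $\tilde g$ on a finite grid $u_0<u_1<\dots<u_N$ of $[0,\tfrac14]$ with mesh $\delta$ and to check $\tilde g(u_k)+\tfrac{L\delta}{2}\le 0.09763$ for each $k$ — a finite computation of the same kind as Algorithm~1, but here one-dimensional and aimed at an upper rather than a lower bound.

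The main obstacle is the tightness of the target: near $u\approx0.0375$ the true minimal area is $\approx 0.09762742$, so the margin to $0.09763$ is only about $3\times10^{-6}$, which forces a genuinely near–optimal choice of the placement family near that value of $u$ together with a fine grid there; and at the fat end $u\approx\tfrac14$ one must check carefully that the chosen placement really allows the square of side $\tfrac14$ to sit inside the convex hull while keeping the area below $0.09763$, since no single one of $C$, $T$, $R_{0.0375}$ contains such a square on its own. An appealing alternative, which would also yield $\beta'\le 0.09763$ with a \emph{single} cover, is to show directly that the near–optimal convex hull $S^{*}$ of Figure~\ref{fig:f11} already contains a congruent copy of every $R_u$, $u\in[0,\tfrac14]$; by compactness in $u$ this again reduces to a finite verification against the explicit polygon $S^{*}$.
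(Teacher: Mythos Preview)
Your outline is broadly correct and close in spirit to the paper's proof, but the paper organises the argument in a way that sidesteps the main difficulty you flag. Rather than committing to a single explicit piecewise–linear family of placements and bounding the resulting function $\tilde g(u)$, the paper works directly with the \emph{minimum} $f(w)=\min_{\text{placements}}\mathcal A(R_w,F',T)$ and proves a Lipschitz bound $|f(w+\epsilon)-f(w)|\le 0.318\,\epsilon$ for $f$ itself. The Lipschitz estimate comes from a comparison: take the optimal configuration for width $w+\epsilon$, keep $F'$ and $T$ fixed, and slide in the rectangle of width $w$ parallel to the old one; the area change is bounded by the area of two thin triangles and a thin strip. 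The advantage is that at each grid point $w_i$ one may choose an \emph{independent} near–optimal configuration (found numerically), with no need for these configurations to fit together into a coherent interpolating family. This is exactly what makes the tight region near $w\approx 0.0375$ manageable: the paper uses $772$ grid points with variable spacing $d_i=(0.09763-f(w_i))/0.318$, rather than having to design a family whose area stays within $3\times 10^{-6}$ of the optimum along an interval.

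One technical slip in your write-up: you refer to ``the vertices of $F$, $T$ and $R_u$'', but the inscribed $500$-gon $F\subset C$ gives $\mathcal A(F,T,R_u)\le\mathcal A(C,T,R_u)$, which is the wrong inequality for an upper bound. The paper instead uses the \emph{circumscribed} $500$-gon $F'\supset C$, so that $\mathcal A(C,T,R_u)\le\mathcal A(F',T,R_u)$ and a polygon computation suffices. Your alternative idea --- showing that the single convex hull $S^{*}$ already contains every $R_u$ --- is not pursued in the paper and would be a genuinely different (and attractive) route if it works.
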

\begin{proof}
Let $l,w$ be the length and width of rectangle $R'$ such that $l+w=\frac{1}{2}$
and $w\in[0,0.25]$.\\
Let $F'$ be the regular 500-gon inscribed into the circle with $r'=\frac{sec(\frac{\pi}{500})}{2\pi}$. Then $C \subset F'$, and $\mathcal{H}(X)=\mathcal{H}(R,C,T)\subset\mathcal{H}(R,F',T)$. Thus,
$\mathcal{A}(X)\leq \mathcal{A}(R,F',T)$.

Let $f(w)$ denotes the minimal area of convex cover $R,F',T$.

\textbf{Claim} For any $\epsilon>0$, $|f(w+\epsilon)-f(w)|\leq 0.318\epsilon$.\\
It suffices to prove the claim only for ``small'' $\epsilon$. We will prove that $f(w)-f(w+\epsilon)\leq 0.318\epsilon$, the proof for inequality $f(w+\epsilon)-f(w)\leq 0.318\epsilon$ is similar. Let $R''$ be the rectangle with width $w+\epsilon$ and perimeter $1$.
Consider optimal configuration of $R'',F',T$, so that $f(\omega+\epsilon)=\mathcal{A}(R'',F',T)$.
Let us put $R'$ parallel to $R''$ as shown on Figure \ref{fig:f12}. This configuration is not necessary optimal, and, because $f$ denotes the area of the \emph{optimal} configuration, $f(w) \leq \mathcal{A}(R',F',T)$.  Hence, $f(w)-f(w+\epsilon)\leq \mathcal{A}(R',F',T)-\mathcal{A}(R'',F',T)$.

Convex hulls $\mathcal{H}(R',F',T)$ and $\mathcal{H}(R'',F',T)$ are polygons, and, by selecting $\epsilon$ sufficiently small, we can assume that all vertices of these polygons, which are not vertices of $R'$ and $R''$, coincides. Then $\mathcal{A}(R',F',T)-\mathcal{A}(R'',F',T)$ is bounded by the total area of triangles $XQ_1R_2$, $YQ_2R_3$, and rectangle $Q_1R_2R_3Q_2$, which is
$$
\frac{1}{2}h_1\epsilon + \frac{1}{2}h_2\epsilon + Q_1Q_2 \epsilon = \frac{\epsilon}{2}(h_1 + h_2 + 2Q_1Q_2) 
$$
We have $Q_1Q_2 = w \leq 0.25$, and, by Corollary \ref{lem:bigrange}, $h_1 + h_2 + Q_1Q_2 \leq 0.386$. Hence, 
$$
f(w)-f(w+\epsilon)\leq \mathcal{A}(R',F',T)-\mathcal{A}(R'',F',T) \leq \frac{\epsilon}{2}(0.386 + 0.25) = 0.318 \epsilon,
$$
which proves the claim.

To verify inequality $f(w)< 0.09763$ at some \emph{specific} point $w$, it is not necessary to find the \emph{optimal} configuration of $R'$, $F'$, and $T$. In suffices just to find \emph{some} configuration with $\mathcal{A}(R',F',T) < 0.09763$, and then conclude that $f(w) \leq \mathcal{A}(R',F',T) < 0.09763$. This makes the numerical verification simple.

We will verify inequality $f(w)< 0.09763$ for $w$ belonging to some finite set $W=\{w_1, w_2, \dots, w_N\}$, where $0 \leq w_1 \leq w_2 \leq \dots \leq w_N \leq 0.25$ are points to be specified below.
By the claim, inequality $f(w_i) < 0.09763$ implies that $f(w)\leq 0.09763$ in the whole interval $w \in [w_i - d_i, w_i + d_i]$, where $d_i = (0.09763 - f(w_i))/0.318$.

\begin{figure}[H]
\begin{centering}
\includegraphics[scale=0.35]{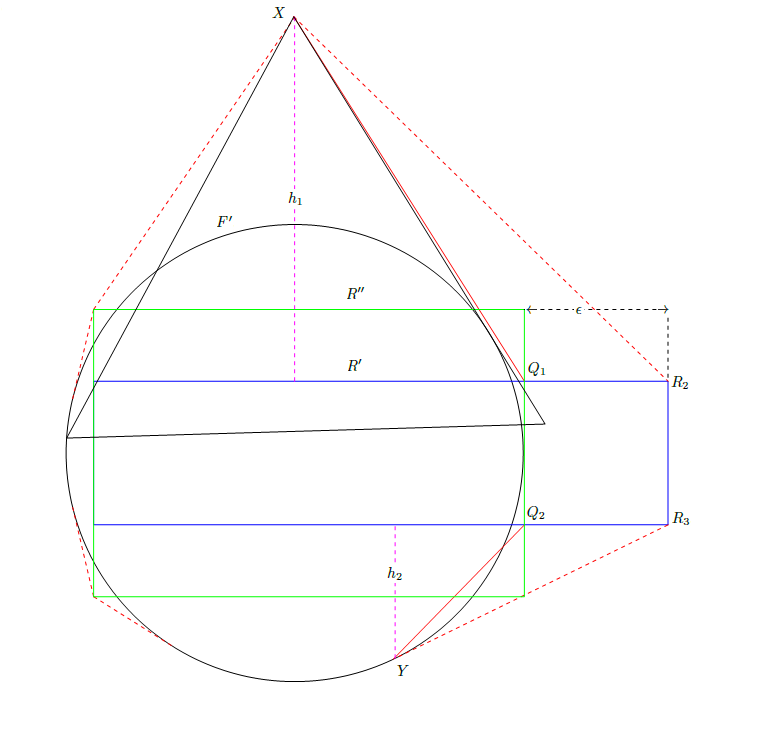}
\par\end{centering}

\caption{The configuration of $R$ and $R'$}
\label{fig:f12}
\end{figure}

We will select set $W$ in such a way that intervals $[w_i - d_i, w_i + d_i]$, $i=1,2,\dots,N$ cover the whole interval $[0, 0.25]$. In other words, $w_1 - d_1 < 0$, $w_N + d_N > 0.25$, and 
\begin{equation}\label{eq:cond2}
w_i + d_i < w_{i+1} - d_{i+1}, \quad i=1,2,\dots, N-1. 
\end{equation}

Set $W$ with $N=772$ points with this property is presented in the Appendix.
For example, $w_1=0.00020$, $w_2 = 0.0034$, $w_3=0.0086$, and so on, $w_{772}= 0.2415 $.

\end{proof}

\section{Conclusion and future work}

Using the method proposed in \cite{som2010improved}, we improved the lower bound of convex covers for closed unit curves from $0.096694$ to $0.0975$. The method is based on studying configurations of the regular 500-gon, rectangle, and equilateral triangle. The use geometric methods to prove Lipschitz bound for the corresponding function, and then use numerical box-search algorithm to finish the proof. 
Based on the numerical results, we conjecture that $T_{1}$ and $R_{2}$ coincides in the optimal configuration, see Figure \ref{fig:f13}. 
Our numerical results actually imply lower bound $0.09762742$, corresponding to the optimal configuration with parameters $x_{1}=0.0255904,y_{1}=0.0013503,x_{2}=0.0653055,y_{2}=0.0050124$
and $\theta=0.0766554$. However, we can formally prove only the weaker bound $0.0975$, using the algorithm which require $n = 7,180,439,126$ iterations. By using more powerful computers, the same methods can lead to, for example, bound $0.0976$. However, we have proved that one cannot achieve the bound $0.09763$ this way. An obvious way to improve the bound would be studying configurations with $4$ objects, but this would increase the dimension of the parameter space.

\begin{figure}[H]
\begin{centering}
\includegraphics[scale=0.7]{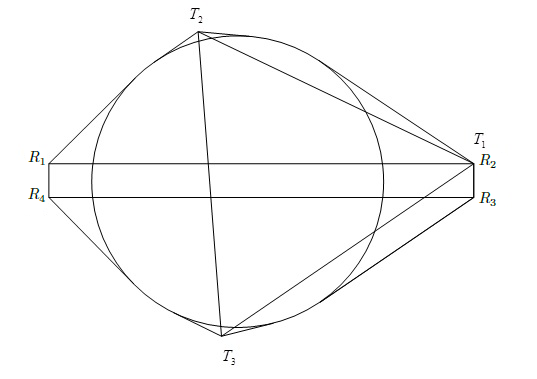}
\par\end{centering}

\caption{The minimum configuration which has an area of $0.09762742$ when
$T_{1}$ and $R_{2}$ coincide }
\label{fig:f13}
\end{figure}


\bibliographystyle{spbasic}      


\appendix

\section{Appendix}

\subsection{Box's search method}

Function $cvh (x_1, x_2, y_1, y_2, \alpha)$ below calculates the area of the convex hull for the configuration defined by parameters $x_1, x_2, y_1, y_2, \alpha$. Function $checkminNB4$ is used to check the main inequality \ref{eq:maincond} in a box domain defined by its parameters $a_1,b_1, \dots, a_5, b_5$.

\subsubsection{The area of convex hull fucntion}

\lstinputlisting{cvha500.m}

\subsubsection{The box's search method}

\lstinputlisting{boxsearch.m}

\subsubsection{The box's search results}

The Box search program displayed a message every $1,000,000$ steps. Figure \ref{fig:table} presents the output of these messages for (approximately) every $100,000,000$ steps. Here, the first column represents progress, in terms of the percentage of the area of the initial box for which the inequality \ref{eq:maincond} is verified. The second column is the iteration number. Figure \ref{fig:rngraph} presents the graphical illustration how progress depends on the number of iterations.

\begin{figure}
\caption{The table of the percentage of $r$ and $n$}\label{fig:table}
\begin{centering}
\begin{tabular}{|c|c|}
\hline 
Percentage of $r$ & $n$\tabularnewline
\hline 
\hline 
6.5703 \% & 100000000\tabularnewline
\hline 
6.5747 \%  & 200000000\tabularnewline
\hline 
6.5762 \%  & 300000000\tabularnewline
\hline 
6.5806 \%  & 400000000\tabularnewline
\hline 
6.5867 \%  & 500000000\tabularnewline
\hline 
6.5882 \%  & 600000000\tabularnewline
\hline 
6.6232 \%  & 702000000\tabularnewline
\hline 
6.6252 \%  & 800000000\tabularnewline
\hline 
6.6375 \%  & 900000000\tabularnewline
\hline 
7.0083 \%  & 1000000000\tabularnewline
\hline 
7.9108 \%  & 1100000000\tabularnewline
\hline 
7.9123 \%  & 1200000000\tabularnewline
\hline 
7.9166 \%  & 1300000000\tabularnewline
\hline 
7.9167 \%  & 1400000000\tabularnewline
\hline 
7.9178 \%  & 1500000000\tabularnewline
\hline 
7.9182 \%  & 1600000000\tabularnewline
\hline 
7.9182 \%  & 1700000000\tabularnewline
\hline 
7.9226 \%  & 1800000000\tabularnewline
\hline 
7.9285 \%  & 1900000000\tabularnewline
\hline 
7.93 \%  & 2000000000\tabularnewline
\hline 
7.93 \%  & 2100000000\tabularnewline
\hline 
7.9303 \%  & 2200000000\tabularnewline
\hline 
7.9596 \%  & 2300000000\tabularnewline
\hline 
7.9611 \%  & 2400000000\tabularnewline
\hline 
7.9612 \%  & 2500000000\tabularnewline
\hline 
7.9655 \%  & 2600000000\tabularnewline
\hline 
7.9655 \%  & 2700000000\tabularnewline
\hline 
7.9657 \%  & 2800000000\tabularnewline
\hline 
7.967 \%  & 2900000000\tabularnewline
\hline 
7.9671 \%  & 3000000000\tabularnewline
\hline 
7.9714 \%  & 3100000000\tabularnewline
\hline 
7.9773 \%  & 3200000000\tabularnewline
\hline 
7.9774 \%  & 3300000000\tabularnewline
\hline 
7.9788 \%  & 3400000000\tabularnewline
\hline 
7.9789 \%  & 3500000000\tabularnewline
\hline 
7.979 \%  & 3600000000\tabularnewline
\hline 
8.3014 \%  & 3700000000\tabularnewline
\hline 
8.3015 \%  & 3800000000\tabularnewline
\hline 
8.3044 \%  & 3900000000\tabularnewline
\hline 
8.3073 \%  & 4000000000\tabularnewline
\hline 
8.3088 \%  & 4100000000\tabularnewline
\hline 
8.3148 \%  & 4200000000\tabularnewline
\hline 
8.3191 \%  & 4300000000\tabularnewline
\hline 
8.3193 \%  & 4400000000\tabularnewline
\hline 
8.3207 \%  & 4500000000\tabularnewline
\hline 
8.3502 \%  & 4600000000\tabularnewline
\hline 
8.3578 \%  & 4700000000\tabularnewline
\hline 
8.3685 \%  & 4800000000\tabularnewline
\hline 
9.6965 \%  & 4900000000\tabularnewline
\hline 
97.2442 \%  & 5000000000\tabularnewline
\hline 
\end{tabular}
\par\end{centering}

\end{figure}

\begin{figure}
\begin{centering}
\begin{tabular}{|c|c|}
\hline 
Percentage of $r$ & $n$\tabularnewline
\hline 
\hline 
97.5846 \%  & 5100000000\tabularnewline
\hline
97.5905 \%  & 5200000000\tabularnewline
\hline 
97.5964 \%  & 5300000000\tabularnewline
\hline 
97.6038 \%  & 5400000000\tabularnewline
\hline 
97.6038 \%  & 5500000000\tabularnewline
\hline 
97.6334 \%  & 5600000000\tabularnewline
\hline 
97.6349 \%  & 5700000000\tabularnewline
\hline 
97.6393 \%  & 5800000000\tabularnewline
\hline 
97.6408 \%  & 5900000000\tabularnewline
\hline 
97.641 \%  & 6000000000\tabularnewline
\hline 
97.6511 \%  & 6100000000\tabularnewline
\hline 
97.6526 \%  & 6200000000\tabularnewline
\hline 
97.6528 \%  & 6300000000\tabularnewline
\hline 
97.9265 \%  & 6400000000\tabularnewline
\hline 
98.9342 \%  & 6500000000\tabularnewline
\hline 
98.9753 \%  & 6600000000\tabularnewline
\hline 
98.9769 \%  & 6700000000\tabularnewline
\hline 
98.9824 \%  & 6800000000\tabularnewline
\hline 
98.9828 \%  & 6900000000\tabularnewline
\hline 
98.9931 \%  & 7000000000\tabularnewline
\hline 
98.9946 \%  & 7100000000\tabularnewline
\hline  
\end{tabular}
\par\end{centering}

\end{figure}

\begin{figure}
\begin{centering}
\includegraphics[scale=0.5]{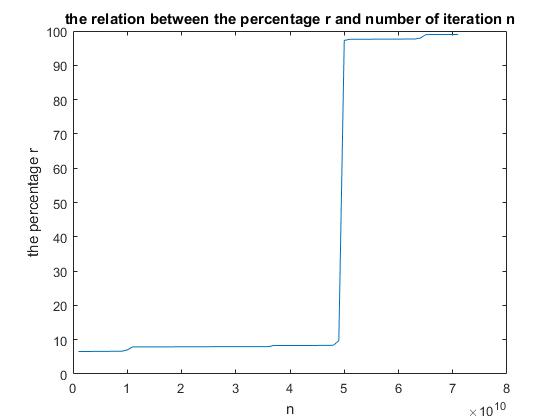}
\par\end{centering}

\caption{The graph of the percentage of $r$ and $n$}\label{fig:rngraph}

\end{figure}

When the program finished, it displayed the result:

r = 3.680103453346938e-04 (this is the area of the initial box, $100\%$ covered)

n = 7.180439126000000e+09 (total number of iterations needed)

min = 0.097626517574902 (the minimal area convex hull)

xx1 = 0.025097656250000

yy1 = 0.002578125000000

xx2 = 0.065327148437500

yy2 = 0.005422070312500

app = 0.079894832702377 (the 5 coordinates for the optimal configuration)

\subsection{Matlab program in Theorem \ref{thm2}}

Program cvhull2 below calculates the convex hull of the given configuration.

\subsubsection{the area of convex hull fucntion}

\lstinputlisting{codecvhul2.m}

\subsubsection{the main program}
The function below finds set $W$ which satisfy the condition \ref{eq:cond2} in the proof of Theorem \ref{thm2}.\\

\lstinputlisting{maincodethm2.m}

\subsubsection{the results}

Set $W$ satisfying the condition \ref{eq:cond2} in the proof of Theorem \ref{thm2}.

\includepdf[pages=-]{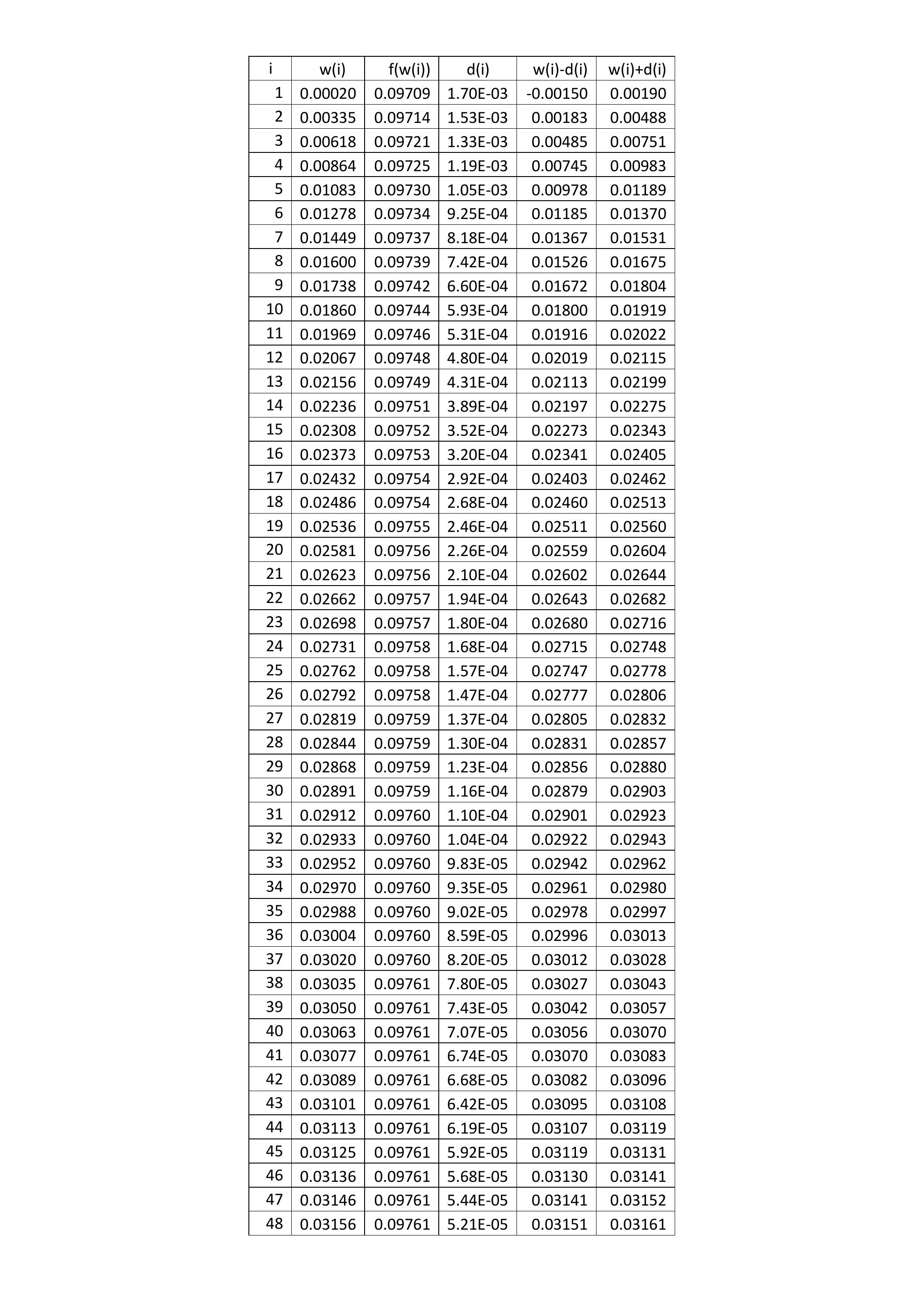}



\end{document}